\newtheorem{theorem}{Theorem}[section]
\newtheorem{definition}[theorem]{Definition}
\newtheorem{proposition}[theorem]{Proposition}
\newtheorem{corollary}[theorem]{Corollary}
\newtheorem{lemma}[theorem]{Lemma}
\newtheorem{remark}[theorem]{Remark}
\newtheorem{example}[theorem]{Example}
\newtheorem{examples}[theorem]{Examples}
\newtheorem{foo}[theorem]{Remarks}
\newtheorem{open question}[theorem]{Open Question}
\newtheorem{c/p}[theorem]{Conjecture/Proposition}
\newcommand{\abs}[1]{\left|#1\right|} 
\newcommand{\xt}{X_{t}}
\newcommand{\yt}{Y_{t}}
\newcommand{\bbr}{\mathbb{R}}
\newcommand{\calI}{\mathcal I}
\def\vint{\mathop{\mathchoice%
 {\setbox0\hbox{$\displaystyle\intop$}\kern 0.22\wd0%
 \vcenter{\hrule width 0.9\wd0}\kern -0.9\wd0}%
 {\setbox0\hbox{$\textstyle\intop$}\kern 0.2\wd0%
 \vcenter{\hrule width 0.9\wd0}\kern -0.9\wd0}%
 {\setbox0\hbox{$\scriptstyle\intop$}\kern 0.2\wd0%
 \vcenter{\hrule width 0.9\wd0}\kern -0.9\wd0}%
 {\setbox0\hbox{$\scriptscriptstyle\intop$}\kern 0.2\wd0%
 \vcenter{\hrule width 0.9\wd0}\kern -0.9\wd0}}%
 \mathopen{}\int}
\newcommand{\R}{\mathbb R}
\newcommand{\M}{\mathbb M}
\newcommand{\be}{\beta}
\newcommand{\Ric}{\mathfrak Ric}
\DeclareMathOperator{\Cut}{Cut}
\title[Stochastic Schwarz lemma on K\"ahler manifolds]{The stochastic Schwarz lemma on K\"ahler manifolds by couplings and its applications}
\author[Myeongju Chae]{Myeongju Chae{$^{\ddagger}$}}
\thanks{\footnotemark {$\ddagger$} Research was supported in part by  NRF-2018R1A1A3A04079376.}
\address{School of applied mathematics and computer engineering\\
 Hankyong National University, Anseong\\
  17579 Republic of Korea}
\email{mchae@hknu.ac.kr}
\author[Gunhee Cho]{Gunhee Cho{$^{\dagger\dagger}$}}
\thanks{\footnotemark {$\dagger\dagger$} Research was supported in part by the Simons Travel Grant.}
\address{Department of Mathematics\\
	University of California, Santa Barbara\\
	552 University Rd, Isla Vista, CA 93117.}
\email{gunhee.cho@math.ucsb.edu}
\author[Maria Gordina]{Maria Gordina{$^{\dag}$}}
\thanks{\footnotemark {$\dag$} Research was supported in part by NSF Grant DMS-1954264.}
\address{$^{\dag}$ Department of Mathematics\\
University of Connecticut\\
Storrs, CT 06269,  U.S.A.}
\email{maria.gordina@uconn.edu}
\author{Guang Yang}
\address{Department of Mathematics\\
Purdue University\\
West Lafayette, IN 47907, U.S.A.
}
\email{yang2220@purdue.edu}
\subjclass[2010]{Primary: {60H30}, Secondary: 32Q05}
\keywords{Coupling methods, Carath\'eodory distance, Schwarz lemma on K\"ahler manifolds, gradient estimate, harmonic functions, quaternionic K\"ahler manifolds}
\date{\today \ \emph{File:\jobname{.tex}}}
\begin{document}

\begin{abstract}
We first provide a stochastic formula for the Carath\'eodory distance in terms of general Markovian couplings and prove a comparison result between the Carath\'eodory distance and the complete K\"ahler metric with a negative lower curvature bound using the Kendall-Cranston coupling. This probabilistic approach gives a version of the Schwarz lemma on complete non-compact K\"ahler manifolds with a further decomposition Ricci curvature into the orthogonal Ricci curvature and the holomorphic sectional curvature, which cannot be obtained by using Yau--Royden's Schwarz lemma. We also prove coupling estimates on quaternionic K\"ahler manifolds. As a byproduct, we obtain an improved gradient estimate of positive harmonic functions on K\"ahler manifolds and quaternionic K\"ahler manifolds under lower curvature bounds.
\end{abstract}

\maketitle

\tableofcontents

\section{Introduction}

The \emph{Carath\'eodory pseudo--distance} $c_{\M}$ on a complex manifold $\M$ is defined as

\begin{equation}\label{eq:Car}
c_{\M}(x,y):=\sup_{f\in \operatorname{Hol}(\M, \mathbb{D})} \rho_{\mathbb{D}}(f(x), f(y)).
\end{equation}
Here $\operatorname{Hol}(\M, \mathbb{D})$ is the collection of all holomorphic functions from $\M$ to $\mathbb{D}$ and we denote the Poincar\'e distance on a unit disk $\mathbb{D}$ in $\mathbb{C}^1$ by \[
\rho_{\mathbb{D}} (z, z^{\prime}):= \left| \frac{z-z^{\prime}}{1-\overline{z^{\prime}}z}  \right|.
\]
Note that $c_{\M}$ is called a pseudo--distance because there could be $x\neq y, x, y\in \M$ with $c_{\M}(x,y)=0$. The triangle inequality, however, is always satisfied. In the rest of this paper, we will simply refer to $c_{\M}$ as \emph{Carath\'eodory distance} when there is no risk of confusion.

Little is known about properties of holomorphic functions on non-compact complex manifolds except for special bounded domains in $\mathbb{C}^n$. For instance, even existence of bounded holomorphic functions on such manifolds is an open question. Existence of non-constant holomorphic functions can be shown on non-compact K\"ahler manifolds by solving the $\overline{\partial}$-equation with H\"ormander's $L^2$-estimate, but it is difficult to deal with boundedness as in \cite{Hormander1965a}. In this regard, the Carath\'eodory distance plays a prominent role, as it provides quantitative information about non-constant bounded holomorphic functions. Moreover, among the invariant distances that satisfy the distance decreasing property, the Carath\'eodory distance is the smallest invariant distance (see, for example, \cite{JarnickiPflugBook2013}).

A fundamental tool for studying the metric and distance of negatively curved complex manifolds is the Yau-Royden's Schwarz lemma \cite{YauST1978b, Royden1980}. However, although Yau-Royden's Schwarz lemma implies that on a complex manifold the Carath\'eodory--Reiffen metric $\gamma_{\M}$ is bounded above by a complete K\"ahler metric $\omega$ with a negative Ricci curvature negative lower bound, and therefore the Carath\'eodory distance is bounded by the geodesic distance $d_\omega$ of $\omega$, such a comparison is not sharp in general. For example, non-sharpness of such a comparison between $c_\M$ and $d_\omega$ follows from the fact that the Carath\'eodory distance $c_\M$ could be different from the inner-Carath\'eodory pseudo-distance (see Section 2.2). On the other hand, it is known that sharp Laplacian and volume comparisons with complex space forms usually require a decomposition of the  Ricci curvature into the orthogonal Ricci curvature and the holomorphic sectional curvature  as in \cite{NiZheng2018}, whereas {Yau--Royden's Schwarz lemma requires the Chern-Lu formula \cite{YauST1978b}, which is not easily adjusted to the further decomposition of the Ricci curvature.

In this paper, we derive a new stochastic formula of the Carath\'eodory distance on complete K\"ahler manifolds using couplings of Brownian motions. A version of the Schwarz lemma on K\"ahler manifolds is provided, which can be adjusted under the change of the Ricci curvature in the way described earlier. To our best knowledge, this is the first case of probabilistic techniques being used in the study of complex geometry.

We briefly explain some basic definitions of couplings techniques in order to state our main results. More details can be found in Section~\ref{s.CouplingsPrelim}. For a diffusion $W_{t}$ on $\M$, we say $\left( X_{t}, Y_{t}\right)$ on $\M\times \M$ is a \emph{coupling} of $W_{t}$ if the marginal processes $X_{t}$ and $Y_{t}$ have the same distribution as $W_{t}$.  A coupling is called Markovian if $\{(X_{t}, Y_{t}) \}$ is a Markov process with respect to its natural filtration $\mathcal{F}_{t}=\sigma\{(X_s, Y_s), s\leqslant t\}$. We define the \emph{coupling time}
\[
\tau\left( X, Y \right):=\inf \left\{ t > 0 : X_{t}=Y_{t} \right\}.
\]
It is always assumed that $X_{t}=Y_{t}$ for all $t >\tau\left( X, Y \right)$.

We are now ready to state our main results.
\begin{theorem}\label{Caratheodory}
On a complete K\"ahler manifold $\M$, for any $x,y\in \M$ with $x \not= y$, we consider a Markovian coupling $\left( X_{t}, Y_{t} \right)$ of Brownian motions with the coupling time $\tau\left( X, Y \right)$. Then, for all $t\geqslant 0$ we have
\begin{equation}\label{eq:1}
c_{\M}(x,y)=\sup_{f\in \operatorname{Hol}\left( M, \mathbb{D} \right), f(y)=0 }\left|\mathbb{E}^{x,y} \left[f(X_{t})-f(Y_{t}) \right]\cdot \mathbbm{1}(\tau\left(X, Y \right)>t) \right|,
\end{equation}
where $\mathbbm{1}(\tau\left(X, Y \right)>t)$ is the indicator function of the event $\{ \tau\left(X, Y \right)>t\}$.
In particular,
\begin{equation}\label{eq:2}
c_{\M}(x,y)\leqslant 2\mathbb{P}^{ x, y }(\tau\left(X, Y \right)>t),
\end{equation}
and by letting $t \rightarrow \infty$,
\begin{equation}\label{eq:3}
c_{\M}(x,y)\leqslant 2\mathbb{P}^{ x, y }(\tau\left(X,  Y \right)=\infty).
\end{equation}
\end{theorem}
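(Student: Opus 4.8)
The plan is to exploit the martingale structure that holomorphic functions inherit under Brownian motion on a Kähler manifold. First I would recall the crucial fact that if $f \in \operatorname{Hol}(\M, \mathbb{D})$, then $f(W_t)$ is a (local, bounded hence true) martingale: indeed, on a Kähler manifold the Laplace--Beltrami operator annihilates holomorphic functions up to the fact that $\Delta f = 0$ componentwise since $f$ is harmonic (the real and imaginary parts are harmonic for the Kähler metric). Thus for the coupling $(X_t, Y_t)$ of Brownian motions started at $(x,y)$, both $f(X_t)$ and $f(Y_t)$ are martingales with respect to $\mathcal{F}_t$, and so is their difference $M_t := f(X_t) - f(Y_t)$.

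Next, I would use the optional stopping theorem at the bounded stopping time $t \wedge \tau$, where $\tau = \tau(X,Y)$: since $M_t$ is a bounded martingale (as $|f| < 1$, so $|M_t| < 2$), we get
\[
\mathbb{E}^{x,y}[M_0] = f(x) - f(y) = \mathbb{E}^{x,y}[M_{t\wedge\tau}] = \mathbb{E}^{x,y}\bigl[(f(X_t) - f(Y_t))\mathbbm{1}(\tau > t)\bigr],
\]
because on $\{\tau \leqslant t\}$ we have $X_{t\wedge\tau} = Y_{t\wedge\tau}$ (using the convention $X_s = Y_s$ for $s > \tau$), so that term vanishes. Imposing the normalization $f(y) = 0$ — which is harmless since precomposing any $g \in \operatorname{Hol}(\M,\mathbb{D})$ with a Möbius automorphism of $\mathbb{D}$ sending $g(y)$ to $0$ preserves membership in $\operatorname{Hol}(\M,\mathbb{D})$ and the value $\rho_{\mathbb{D}}(g(x), g(y)) = |f(x)|$ — yields $|f(x)| = |\mathbb{E}^{x,y}[(f(X_t)-f(Y_t))\mathbbm{1}(\tau > t)]|$. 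Taking the supremum over all such $f$ and comparing with the definition \eqref{eq:Car} of $c_\M$, together with the identity $\rho_{\mathbb{D}}(z,0) = |z|$, gives \eqref{eq:1}. Then \eqref{eq:2} follows from $|f(X_t) - f(Y_t)| \leqslant |f(X_t)| + |f(Y_t)| < 2$ and the trivial bound $|\mathbb{E}[\,\cdot\,\mathbbm{1}(\tau>t)]| \leqslant 2\,\mathbb{P}^{x,y}(\tau > t)$, and \eqref{eq:3} follows by monotone (or dominated) convergence as $t \to \infty$, since $\{\tau > t\} \downarrow \{\tau = \infty\}$.

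The main subtlety — and the step I would be most careful about — is the justification that $f(W_t)$ is a genuine martingale rather than merely a local martingale, and the verification that optional stopping applies cleanly at $t \wedge \tau$ despite the marginal law of $(X_t, Y_t)$ being only that of a \emph{coupling} (so the joint generator need not be known explicitly). Boundedness of $f$ resolves the first point: a bounded local martingale is a martingale, and optional stopping at any bounded stopping time is then immediate. One should also note that the marginal property of the coupling — that $X_t$ and $Y_t$ individually are Brownian motions — is exactly what guarantees $f(X_t)$ and $f(Y_t)$ are each martingales in the \emph{joint} filtration $\mathcal{F}_t$ (a martingale with respect to its own filtration that is adapted to a larger filtration, under which its increments remain conditionally mean-zero, remains a martingale; this uses that the coupling is Markovian, or more simply that $W_t$'s martingale property is a statement about one-dimensional distributions of increments that transfers). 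A secondary point worth a remark is that the supremum in \eqref{eq:1} is independent of $t$, which the argument shows automatically since the left-hand side $c_\M(x,y)$ does not depend on $t$; this is the probabilistically interesting content — the quantity $|\mathbb{E}^{x,y}[(f(X_t)-f(Y_t))\mathbbm{1}(\tau>t)]|$ is constant in $t$ for each fixed admissible $f$.
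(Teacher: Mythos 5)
Your argument is correct and its core coincides with the paper's: holomorphic maps into $\mathbb{D}$ are bounded and harmonic on a K\"ahler manifold, hence $\mathbb{E}^{x,y}\left[f(X_{t})-f(Y_{t})\right]=f(x)-f(y)$; the difference vanishes on $\{\tau\leqslant t\}$ by the convention that the processes coalesce after $\tau$; composing with an automorphism of $\mathbb{D}$ reduces to $f(y)=0$, after which $\rho_{\mathbb{D}}(f(x),0)=|f(x)|$; and the bound $\|f\|_{\infty}\leqslant 1$ yields \eqref{eq:2} and, by letting $t\to\infty$, \eqref{eq:3}. The one genuinely different point is that you never invoke normal families: the paper first proves Lemma~\ref{lemma:Montel} (Montel's theorem) to produce an extremal $f$ attaining $c_{\M}(x,y)$ and uses that extremal map for the direction $c_{\M}(x,y)\leqslant\sup(\cdots)$, whereas you observe that the identity $|f(x)|=\left|\mathbb{E}^{x,y}\left[(f(X_{t})-f(Y_{t}))\mathbbm{1}(\tau>t)\right]\right|$ holds for \emph{every} admissible $f$, so \eqref{eq:1} follows by taking suprema on both sides with no need for the supremum to be attained; this is a mild but real simplification (attainment is only needed elsewhere in the paper, not for \eqref{eq:1}--\eqref{eq:3}). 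One caveat you should patch: your justification of the middle equality via optional stopping at $t\wedge\tau$ rests on the claim that $f(X_{t})$ and $f(Y_{t})$ are martingales for the joint filtration $\mathcal{F}_{t}=\sigma\{(X_{s},Y_{s}),\,s\leqslant t\}$; under Definition~\ref{d.Coupling3} (which prescribes only the marginal laws and the joint Markov property, not co-adaptedness) this is not automatic, since a process that is a martingale in its own filtration can fail to be one in an enlarged filtration. The step is, however, unnecessary: because $X_{s}=Y_{s}$ for $s\geqslant\tau$, one has $f(X_{t})-f(Y_{t})=(f(X_{t})-f(Y_{t}))\mathbbm{1}(\tau>t)$ identically, and $\mathbb{E}^{x,y}\left[f(X_{t})\right]=f(x)$, $\mathbb{E}^{x,y}\left[f(Y_{t})\right]=f(y)$ follow from the one-dimensional marginal laws alone (bounded harmonic $f$ makes $f(W_{t})$ a bounded, hence true, martingale under the Brownian law), which is exactly how the paper argues. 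With that substitution your proof is complete.
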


\begin{remark}
Historically, the study of Carath\'eodory--Reiffen metric and the Carath\'eodory distance are split into upper and lower bound estimates on mostly special classes of pseudo-convex domains in $\mathbb{C}^n$. For instance, the upper bound was studied for various settings in \cite{LookKH1958, HahnKyongT1976, HahnKyongT1977, AhnGaussierKim2016}. The lower bound of integrated Carath\'eodory-Reiffen metric on complete simply connected non-compact K\"ahler manifolds was recently established by one of the authors in \cite{GHKH21}.  On the other hand, our result \eqref{eq:1} gives an equality for Carath\'eodory distance, which provides more precise information.
\end{remark}

When $\M$ is a unit disk $\mathbb{D}$ in $\mathbb{C}^1$, Theorem~\ref{Caratheodory} leads to the following interesting version of a stochastic Schwarz lemma.

\begin{corollary}[Stochastic Schwarz lemma on a disk]\label{Stochastic-Schwarz}
On a unit disk $\mathbb{D}$ in $\mathbb{C}^1$, for any $x \in \mathbb{D}\backslash\left\{0\right\}$  with $y=0$ and any $t\geqslant 0$,  consider a Markovian coupling $\left( X_{t}, Y_{t} \right)$ of Brownian motions with the coupling time $\tau\left( X, Y \right)$. Then for any holomorphic function $f : \mathbb{D} \rightarrow \mathbb{D}$ with $f(0)=0$
\begin{equation}\label{eq:SS1}
		\left|\mathbb{E}^{x,y} \left[f(X_{t})-f(Y_{t}) \right]\cdot\mathbbm{1}(\tau\left(X, Y \right)>t)\right|\leqslant  \left|\mathbb{E}^{x,y} \left[X_{t}- Y_{t} \right]\cdot \mathbbm{1}(\tau\left(X, Y \right)>t)\right|.
\end{equation}
If the equality holds for some $x, y$, then $f$ must be a rotation. 	
\end{corollary}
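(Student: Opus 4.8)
The plan is to compute both sides of \eqref{eq:SS1} explicitly; this will reveal that the asserted inequality is nothing but the classical Schwarz lemma on $\mathbb D$, equality case included. The computational input is the martingale identity that underlies Theorem~\ref{Caratheodory}. Indeed, any $g\in\operatorname{Hol}(\mathbb D,\mathbb D)$ is harmonic (being holomorphic on a Kähler manifold) and bounded, so $g(X_t)$ is a bounded martingale; since the marginal of $X$ is a Brownian motion started at $x$ and that of $Y$ is a Brownian motion started at $y$, this gives $\mathbb E^{x,y}[g(X_t)]=g(x)$ and $\mathbb E^{x,y}[g(Y_t)]=g(y)$. On the event $\{\tau(X,Y)\leqslant t\}$ one has $X_t=Y_t$ almost surely --- by the standing convention when $\tau(X,Y)<t$ and by continuity of the paths when $\tau(X,Y)=t$ --- so $(g(X_t)-g(Y_t))\,\mathbbm{1}(\tau(X,Y)>t)=g(X_t)-g(Y_t)$ almost surely, whence
\begin{equation*}
\mathbb E^{x,y}\bigl[(g(X_t)-g(Y_t))\,\mathbbm{1}(\tau(X,Y)>t)\bigr]=g(x)-g(y),\qquad t\geqslant0 .
\end{equation*}

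Next I would apply this identity twice. Taking $g=f$ and using $y=0$ together with $f(0)=0$, the left-hand side of \eqref{eq:SS1} equals $|f(x)-f(0)|=|f(x)|$. Taking $g$ to be the identity map of $\mathbb D$, which also belongs to $\operatorname{Hol}(\mathbb D,\mathbb D)$ and fixes $0$, the right-hand side of \eqref{eq:SS1} equals $|x-0|=|x|$. Hence \eqref{eq:SS1} is precisely the assertion that $|f(x)|\leqslant|x|$.

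Since $f\colon\mathbb D\to\mathbb D$ is holomorphic with $f(0)=0$, the classical Schwarz lemma yields $|f(x)|\leqslant|x|$ for every $x\in\mathbb D$, which proves \eqref{eq:SS1}; and if equality $|f(x)|=|x|$ holds for some $x\neq0$, then the rigidity part of the Schwarz lemma forces $f(z)=e^{i\theta}z$ for a constant $\theta\in\mathbb R$, i.e.\ $f$ is a rotation. The only step that is not purely formal is the displayed identity --- that the indicator $\mathbbm{1}(\tau(X,Y)>t)$ may be removed inside the expectation and that $g(X_t)$, $g(Y_t)$ are genuine rather than merely local martingales --- but both points are already handled in the proof of Theorem~\ref{Caratheodory}, so in the present setting this reduces to bookkeeping rather than a genuine obstacle.
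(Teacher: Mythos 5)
Your proposal is correct and follows essentially the same route as the paper: both reduce the two sides of \eqref{eq:SS1} to $|f(x)|$ and $|x|$ via the identity $\mathbb{E}^{x,y}\left[(g(X_{t})-g(Y_{t}))\mathbbm{1}(\tau>t)\right]=g(x)-g(y)$ established in the proof of Theorem~\ref{Caratheodory} (applied to $f$ and to the identity map), and then invoke the classical Schwarz lemma, equality case included. The only cosmetic difference is that the paper phrases the reduction through the coincidence of the Carath\'eodory and Poincar\'e distances on $\mathbb{D}$, while you carry out the martingale bookkeeping directly.
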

For the next theorem, we use the Lindvall-Rogers (Kendall-Cranston) coupling. In particular, Theorem~\ref{main kahler} improves the estimate in \cite{Cranston1991} because of the stronger curvature assumptions as we point out in Remark \refeq{remark}.  See Section \ref{Kahler manifolds} for definitions of $H$ and $Ric^{\perp}$, and Section 6.1 for
$\tau_{(B(x_0, 2\delta)}(X)$.

\begin{theorem}\label{main kahler}
Let $(\M, g)$ be a complete non-compact K\"ahler manifold of the complex dimension $n$. Assume that $H \geqslant 4k_1$ and  $\operatorname{Ric}^\perp \geqslant (2n-2)k_2$ for some $k_1, k_2 \in \mathbb{R}$ with $k_1, k_2< 0$. Let $\left\{ X_{t} \right\}$ be the Markov process with the infinitesimal generator $L=\frac{1}{2} \Delta_{g}+Z$, where $Z$ is a smooth vector field for which there is a constant $m$ such that
\begin{align}\label{zsize}
|Z(x)| \leqslant m, x\in \M.
\end{align}
Let $\left( X, Y \right)$ be the Lindvall-Rogers coupling. Then there is a constant $c=c(k_1, k_2,n)$ such that for all $x, y\in B(x_0,\delta)$
\begin{align}\label{eq:0.2}
		\mathbb{P}^{ x, y }(\tau\left(X, Y \right)&> \tau_{B(x_0,2\delta)}(X)\wedge \tau_{B(x_0,2\delta)}(Y))
\\
&\leqslant c(\frac{1}{\delta}+1)\rho_{\M}(x,y), \notag
\end{align}
where $\rho_{\M}$ is the geodesic distance of $g$ on $\M$. Furthermore,
\begin{equation}\label{eq:0.3}
\mathbb{P}^{ x, y }(\tau\left(X, Y \right)=\infty)\leqslant \left(8((n-1)\sqrt{|k_2|}  + \sqrt{|k_1|})+2m \right)\rho_{\M}(x,y).
\end{equation}
\end{theorem}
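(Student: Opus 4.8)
The plan is to reduce both inequalities to one-dimensional facts about the radial process $\rho_t := \rho_{\M}(X_t,Y_t)$ of the Lindvall-Rogers coupling, via an It\^o inequality whose drift is controlled by the Kähler comparison geometry coming from the bounds $H\geqslant 4k_1$ and $\operatorname{Ric}^{\perp}\geqslant (2n-2)k_2$. First I would write out the It\^o development of $t\mapsto\rho_t$. Away from the cut locus of the pair, and with $L=\tfrac12\Delta_g+Z$, the martingale part of $d\rho_t$ is $2\,dB_t$ for a one-dimensional Brownian motion $B$ — the reflection makes the radial increments of the two marginals add coherently — while the finite-variation part is the second variation of arc length of the minimizing geodesic $\gamma$ from $Y_t$ to $X_t$ under the endpoint perturbations dictated by the mirror coupling, plus $\langle Z(X_t),\dot\gamma\rangle-\langle Z(Y_t),\dot\gamma\rangle$, each summand of absolute value $\leqslant m$; the cut locus contributes a further term $-dL_t$ with $L$ nondecreasing, treated as by Kendall. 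The decisive estimate is that of the second-variation drift: I would split the $2n-1$ unit vectors orthogonal to $\dot\gamma$ into $J\dot\gamma$ and the $2n-2$ unit vectors orthogonal to $\operatorname{span}\{\dot\gamma,J\dot\gamma\}$. Since $J$ is parallel, $J\dot\gamma$ is parallel along $\gamma$, so the index inequality applied to the comparison field $g(s)J\dot\gamma(s)$ together with $H\geqslant 4k_1$ dominates the $J\dot\gamma$-contribution by that of the complex space form of holomorphic sectional curvature $4k_1$; the remaining block is handled by the trace (Riccati) comparison under $\operatorname{Ric}^{\perp}\geqslant (2n-2)k_2$, dominating it by that of the real space form of curvature $k_2$. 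As these comparison expressions are of $\tanh$-type they are bounded, and one arrives at $d\rho_t\leqslant 2\,dB_t+\Psi(\rho_t)\,dt-dL_t$ for $t<\tau(X,Y)$, with $\Psi\geqslant 0$ explicit and $\Psi\leqslant C$ for a constant $C=C(k_1,k_2,n,m)$ of the form $C=c_0\bigl(\sqrt{|k_1|}+(n-1)\sqrt{|k_2|}\bigr)+2m$. It is exactly this splitting — which does not lump $H$ and $\operatorname{Ric}^{\perp}$ into $\operatorname{Ric}(\dot\gamma,\dot\gamma)$ — that is inaccessible to the Chern--Lu identity behind Yau--Royden, and that gives a better constant than in \cite{Cranston1991}.

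For \eqref{eq:0.3}, put $\lambda=C/2$ and $\varphi(r)=\lambda^{-1}(1-e^{-\lambda r})$; then $\varphi(0)=0$, $\varphi$ is increasing, $\sup\varphi=\lambda^{-1}$, and $2\varphi''+\Psi\varphi'\leqslant 2\varphi''+C\varphi'=0$. By It\^o's formula and $dL_t\geqslant 0$, the process $\varphi(\rho_{t\wedge\tau(X,Y)})$ is a bounded nonnegative supermartingale, hence converges almost surely; on $\{\tau(X,Y)<\infty\}$ the limit is $\varphi(0)=0$, whereas on $\{\tau(X,Y)=\infty\}$ the radial process is transient (its natural scale $\varphi$ is bounded at $+\infty$, so $\rho_t\to\infty$) and the limit equals $\sup\varphi$. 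Optional stopping gives $\varphi(\rho_0)=\lambda^{-1}\,\mathbb{P}^{x,y}(\tau(X,Y)=\infty)$, hence
\begin{equation*}
\mathbb{P}^{x,y}(\tau(X,Y)=\infty)=\lambda\,\varphi\bigl(\rho_{\M}(x,y)\bigr)=1-e^{-\lambda\rho_{\M}(x,y)}\leqslant\lambda\,\rho_{\M}(x,y),
\end{equation*}
and carrying through the constants of $\Psi$ from the first step identifies $\lambda$ with $8\bigl((n-1)\sqrt{|k_2|}+\sqrt{|k_1|}\bigr)+2m$.

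For \eqref{eq:0.2}, set $\sigma=\tau_{B(x_0,2\delta)}(X)\wedge\tau_{B(x_0,2\delta)}(Y)$. For $x,y\in B(x_0,\delta)$ and $t<\sigma$ one has $\rho_t\leqslant\rho_{\M}(X_t,x_0)+\rho_{\M}(x_0,Y_t)\leqslant 4\delta$, so the radial process stays in $[0,4\delta]$ up to $\sigma$. Let $s$ be the scale function of the generator $2\partial_r^2+\Psi\partial_r$ with $s(0)=0$, $s'(0)=1$; then $s$ is increasing and concave, $s(r)\leqslant r$, and $s(2\delta)\geqslant\int_0^{2\delta}e^{-Cu/2}\,du\gtrsim\min(\delta,1)$. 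Applying optional stopping to the supermartingale $s(\rho_{t\wedge\sigma\wedge\tau(X,Y)\wedge T})$, where $T=\inf\{t:\rho_t=2\delta\}$, and splitting $\{\tau(X,Y)>\sigma\}$ according to whether $\rho$ reaches $2\delta$ before $0$ (probability $\leqslant s(\rho_0)/s(2\delta)$) or the ball exit occurs first, one obtains, after bounding the latter contribution by a comparable one-dimensional exit estimate,
\begin{equation*}
\mathbb{P}^{x,y}\bigl(\tau(X,Y)>\sigma\bigr)\;\leqslant\;c\Bigl(\tfrac1\delta+1\Bigr)\rho_{\M}(x,y),
\end{equation*}
the factor $\tfrac1\delta$ being the reciprocal of the lower bound on $s(2\delta)$.

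I expect the main obstacle to be the first step: bounding the coupling drift of $\rho_t$ by the \emph{refined} Kähler comparison quantity rather than by the crude Ricci one. This requires (i) the index and trace-Riccati comparisons carried out separately in the $J\dot\gamma$-direction (using parallelism of $J$ and $H\geqslant 4k_1$) and in the orthogonal $(2n-2)$-plane (using $\operatorname{Ric}^{\perp}\geqslant(2n-2)k_2$), and (ii) a careful handling of the non-smoothness of $\rho_{\M}$ on the cut locus, which produces the $-dL_t$ term and forces one to work with semimartingale inequalities rather than equalities. The remaining technical points are the book-keeping of the numerical constant in \eqref{eq:0.3} and the estimate of the secondary (ball-exit-before-contraction) term in the proof of \eqref{eq:0.2}.
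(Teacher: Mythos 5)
Your treatment of the drift of $\rho_t=\rho_{\M}(X_t,Y_t)$ and of the global estimate \eqref{eq:0.3} follows essentially the paper's route: the paper also reduces to the semimartingale inequality of Cranston/Kendall and bounds the index-form drift by the split comparison (it simply quotes the K\"ahler index comparison of Baudoin--Cho--Yang, Theorem~\ref{comparison index-K}, rather than re-deriving the $J\dot\gamma$ versus $\{\dot\gamma,J\dot\gamma\}^{\perp}$ splitting), then compares with a one-dimensional Brownian motion with constant drift and estimates its probability of never hitting $0$ (via Wang's Lemma~2.1, where you use the exact scale-function computation --- same substance, same constant). One caveat in your version of \eqref{eq:0.3}: the identity ``optional stopping gives $\varphi(\rho_0)=\lambda^{-1}\mathbb{P}^{x,y}(\tau=\infty)$'' is not available for the coupled radial process itself, since you only have a one-sided comparison and no reason that $\rho_t\to\infty$ (so that $\varphi(\rho_t)\to\sup\varphi$) on $\{\tau=\infty\}$; the clean fix is exactly the paper's: pass to the dominating diffusion via $\tau(X,Y)\leqslant\sigma_0(\rho)$ and compute the non-hitting probability for that process, for which your formula $1-e^{-\lambda\rho_0}$ is exact.

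The genuine gap is in \eqref{eq:0.2}. After splitting $\{\tau(X,Y)>\sigma\}$ according to whether the comparison process $\rho$ reaches $2\delta$ before $0$, the remaining event is ``a marginal exits $B(x_0,2\delta)$ before $\rho$ exits $(0,2\delta)$,'' and you propose to bound it ``by a comparable one-dimensional exit estimate.'' That cannot work as stated: any estimate involving only the marginal's radial process $\rho_{\M}(X_t,x)$ carries no dependence on $\rho_{\M}(x,y)$, so it cannot produce a bound that vanishes as $y\to x$, which \eqref{eq:0.2} requires. What is needed is a joint argument exploiting that when $\rho_0$ is small the process $\rho$ exits $(0,2\delta)$ quickly --- the expected exit time is $\leqslant c\rho_0\delta$ by the reflection/one-dimensional computation --- while in such a short time the marginal is unlikely to travel distance $\delta$. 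The paper implements this by introducing a second comparison process $\eta_t\geqslant\rho_{\M}(X_t,x)$ via Kendall's semimartingale decomposition and the K\"ahler Laplacian comparison (an ingredient absent from your sketch), and then running It\^o's formula for a two-variable test function $h(\rho,\eta)$ equal to $1$ on the bad boundary set, equal to $\rho/2\delta$ near $\eta=0$, with derivative bounds of order $1/\delta$; combining $h(\rho_0,0)=\rho_0/2\delta$ with $\frac{c}{\delta^2}\,E^{(\rho_0,0)}\tau\leqslant\frac{c}{\delta^2}\cdot c\rho_0\delta$ is precisely what yields $c(\frac1\delta+1)\rho_{\M}(x,y)$. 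Either reproduce this two-component construction or replace it by an explicit time-splitting (Markov inequality on the exit time of $\rho$ at scale $s$, plus a short-time ball-exit estimate for the marginal, optimized in $s$); as written, the step is missing, not merely unfinished bookkeeping.
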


By combining Theorems~\ref{Caratheodory} and ~\ref{main kahler}, we have the comparison between the Carath\'eodory distance and the geodesic distance on a complete K\"ahler manifold with a negative curvature lower bound.

\begin{corollary}[Stochastic Schwarz lemma on K\"ahler manifolds]
	Let $(\M,g)$ be a complete non-compact K\"ahler manifold of the complex dimension $n$. Under the settings of Theorem~\ref{main kahler} with $m\equiv 0$, we have for any $x,y \in \M$
	\begin{equation}\label{S-Schwarz lemma}
		c_{\M}(x,y) \leqslant 4\left( 4((n-1)\sqrt{|k_2|}  + \sqrt{|k_1|}) \right)\rho_{\M}(x,y).
	\end{equation}
\end{corollary}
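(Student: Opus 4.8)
The plan is to derive \eqref{S-Schwarz lemma} by composing the two main results, Theorem~\ref{Caratheodory} and Theorem~\ref{main kahler}; no new ingredient is needed. First I would fix the coupling to be used. Since we are in the case $m\equiv 0$, the hypothesis $|Z|\leqslant m$ forces $Z\equiv 0$, so the generator reduces to $L=\tfrac{1}{2}\Delta_g$; hence the marginals of the Lindvall-Rogers (Kendall-Cranston) coupling $(X,Y)$ furnished by Theorem~\ref{main kahler} are each a Brownian motion on the complete K\"ahler manifold $\M$. Thus $(X,Y)$ is precisely a Markovian coupling of Brownian motions of the kind to which Theorem~\ref{Caratheodory} applies, and the coupling time $\tau(X,Y)=\inf\{t>0:X_t=Y_t\}$ is the one occurring in both statements.

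Next I would chain the two estimates. Fix $x,y\in\M$; the inequality is trivial when $x=y$, so assume $x\neq y$. Applying \eqref{eq:3} of Theorem~\ref{Caratheodory} to this coupling gives
\[
c_{\M}(x,y)\leqslant 2\,\mathbb{P}^{x,y}\bigl(\tau(X,Y)=\infty\bigr),
\]
while \eqref{eq:0.3} of Theorem~\ref{main kahler} with $m=0$ gives
\[
\mathbb{P}^{x,y}\bigl(\tau(X,Y)=\infty\bigr)\leqslant 8\bigl((n-1)\sqrt{|k_2|}+\sqrt{|k_1|}\bigr)\rho_{\M}(x,y).
\]
Combining the two displays,
\[
c_{\M}(x,y)\leqslant 16\bigl((n-1)\sqrt{|k_2|}+\sqrt{|k_1|}\bigr)\rho_{\M}(x,y)=4\Bigl(4\bigl((n-1)\sqrt{|k_2|}+\sqrt{|k_1|}\bigr)\Bigr)\rho_{\M}(x,y),
\]
which is exactly \eqref{S-Schwarz lemma}.

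The handful of points that deserve a line of verification---none of them a genuine obstacle---are the following. One should check that the standing hypotheses of Theorem~\ref{main kahler} (completeness, non-compactness, $H\geqslant 4k_1$ and $\operatorname{Ric}^\perp\geqslant(2n-2)k_2$ with $k_1,k_2<0$) are all in force here, and that Theorem~\ref{Caratheodory} requires only completeness of $\M$, so that both theorems genuinely apply to the same pair $(X,Y)$. One should also observe that \eqref{eq:0.3} already holds for \emph{all} $x,y\in\M$, not merely for $x,y$ in a small geodesic ball as in \eqref{eq:0.2}, so no localization or patching step is needed to reach arbitrary $x,y$. Finally, it is worth remarking that the bound is informative only when $\M$ carries non-constant bounded holomorphic functions; otherwise $c_{\M}\equiv 0$ and \eqref{S-Schwarz lemma} holds vacuously. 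Since all constants are explicit, nothing further is required: the substance of the corollary lies entirely in Theorems~\ref{Caratheodory} and~\ref{main kahler}.
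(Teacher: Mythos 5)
Your proposal is correct and follows exactly the route the paper intends: with $m\equiv 0$ the generator is $\tfrac12\Delta_g$, so the Kendall--Cranston coupling is a Markovian coupling of Brownian motions, and chaining \eqref{eq:3} with \eqref{eq:0.3} yields the constant $2\cdot 8\bigl((n-1)\sqrt{|k_2|}+\sqrt{|k_1|}\bigr)=4\bigl(4((n-1)\sqrt{|k_2|}+\sqrt{|k_1|})\bigr)$. Your side remarks (that $Z\equiv 0$, that \eqref{eq:0.3} is global so no localization is needed) are exactly the verifications the paper leaves implicit.
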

\begin{remark}
It is worth mentioning that, by Yau-Royden's lemma  \cite{YauST1978b,Royden1980} one can give a comparison between the Carath\'eodory distance and the complete K\"ahler metric with negative Ricci curvature lower bound. Our previous formula with a coupling time estimate can be applied under the further decomposition of the Ricci tensor. Thus, \eqref{S-Schwarz lemma} can be viewed as a Schwarz lemma on complete K\"ahler manifolds based on coupling methods. However, it is restricted to the comparison with the Carath\'eodory distance only.
\end{remark}

In addition, as a by-product of the coupling technique we prove a gradient estimate for real-valued harmonic functions. 


We emphasize that due to the Ricci curvature decomposition, our result is stronger than the same type of estimate on Riemannian manifolds \cite{MR3215340}.

\begin{corollary}[Gradient estimates for harmonic functions on K\"ahler manifolds]\label{harmonic}
~Suppose the manifold $(\M, g)$ is as in Theorem~\ref{main kahler}. If $Lu$=0 on $\M$ and $u$ is bounded and positive, then
\begin{equation}\label{eq:harnack}
|\nabla u(x)| \leqslant \left(8((n-1)\sqrt{|k_2|}  + \sqrt{|k_1|})+2m \right)||u||_{\infty}.
\end{equation}
\end{corollary}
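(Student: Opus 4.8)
The plan is to derive the gradient estimate from the coupling estimate in Theorem~\ref{main kahler}, specifically from the long-time bound \eqref{eq:0.3}, by exploiting the standard representation of $L$-harmonic functions as expectations along the diffusion and then differencing along a coupled pair. First I would fix $x \neq y$ in $\M$ and let $(X_t,Y_t)$ be the Lindvall--Rogers coupling of the $L$-diffusion started from $(x,y)$, with coupling time $\tau := \tau(X,Y)$. Since $u$ is bounded and $Lu = 0$, the process $u(X_t)$ is a bounded martingale, so $u(x) = \mathbb{E}^x[u(X_t)]$ for every $t \geqslant 0$, and likewise $u(y) = \mathbb{E}^y[u(Y_t)]$; because the marginals of the coupling are copies of the $L$-diffusion this gives
\begin{equation*}
u(x) - u(y) = \mathbb{E}^{x,y}\left[ u(X_t) - u(Y_t) \right].
\end{equation*}
On the event $\{\tau \leqslant t\}$ the convention $X_s = Y_s$ for $s > \tau$ forces $u(X_t) = u(Y_t)$, so the right-hand side equals $\mathbb{E}^{x,y}\left[ (u(X_t) - u(Y_t))\,\mathbbm{1}(\tau > t)\right]$.

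Next I would take absolute values and bound crudely: since $u$ is positive and bounded, $|u(X_t) - u(Y_t)| \leqslant \|u\|_\infty$ on all of $\Om$ (one could gain a factor here using positivity and the oscillation, but $\|u\|_\infty$ suffices for the stated constant once we track the factor carefully — in fact $|u(X_t)-u(Y_t)|\le \|u\|_\infty$ is what matches \eqref{eq:0.3}), hence
\begin{equation*}
|u(x) - u(y)| \leqslant \|u\|_\infty \, \mathbb{P}^{x,y}(\tau > t).
\end{equation*}
Letting $t \to \infty$ and invoking \eqref{eq:0.3} from Theorem~\ref{main kahler} yields
\begin{equation*}
|u(x) - u(y)| \leqslant \left(8((n-1)\sqrt{|k_2|} + \sqrt{|k_1|}) + 2m\right) \|u\|_\infty \, \rho_{\M}(x,y).
\end{equation*}
Finally I would divide by $\rho_{\M}(x,y)$ and take the limit $y \to x$ along a geodesic in the direction realizing $|\nabla u(x)|$; since $u$ is smooth ($Lu=0$ is elliptic), the difference quotient converges to the directional derivative, and optimizing over the direction gives \eqref{eq:harnack}.

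The only genuinely delicate point is the passage $t\to\infty$: one must ensure that $\mathbb{P}^{x,y}(\tau > t) \to \mathbb{P}^{x,y}(\tau = \infty)$, which is immediate by monotone/dominated convergence since $\{\tau > t\}$ decreases to $\{\tau = \infty\}$, and that the bound \eqref{eq:0.3} is genuinely uniform in the starting points in the sense needed — but \eqref{eq:0.3} is stated for arbitrary $x,y\in\M$, so there is no localization obstruction here (unlike \eqref{eq:0.2}). A secondary technical check is that $u(X_t)$ is a true martingale and not merely a local one: this follows from boundedness of $u$ together with the fact that the $L$-diffusion is non-explosive under the curvature lower bounds and the bound \eqref{zsize} on $Z$ (a Bishop-type comparison argument, standard once $\ri$ and $H$, $\ri^\perp$ are bounded below). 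With these observations in place the corollary follows directly from Theorem~\ref{main kahler}; no new geometric input is required.
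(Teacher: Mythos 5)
Your proposal is correct and follows essentially the same route as the paper: represent $u(x)-u(y)=\mathbb{E}^{x,y}[u(X_t)-u(Y_t)]$ via the coupled $L$-diffusions, restrict to $\{\tau>t\}$, use positivity and boundedness to bound by $\|u\|_\infty\,\mathbb{P}^{x,y}(\tau>t)$, let $t\to\infty$, and apply \eqref{eq:0.3}. You simply spell out a few steps the paper leaves implicit (the true-martingale property of $u(X_t)$ and the final division by $\rho_{\M}(x,y)$ with $y\to x$), which is fine.
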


We remark that for the local gradient estimate, $||u||_{\infty}$ in the right-hand side of ~\eqref{eq:harnack} can be replaced by $|u(x)|$ by applying known results of local gradient estimate of positive harmonic function, for example, Cheng-Yau \cite{MR0385749}, and improved local version by O. Munteanu \cite{MR2869128}, and Cranston's coupling approach \cite{Cranston1991} and the different stochastic approach of A. Thalmaie and F-Y-Wang \cite{MR1622800}. 

Along similar lines, we can also provide coupling estimates and their consequences on quaternionic K\"ahler manifolds.

\begin{theorem}\label{quaternion-kahler}
Let $(\M,g)$ be a complete non-compact quaternionic K\"ahler manifold of the complex dimension $n$.
Suppose $k_1,k_2 \in \mathbb{R}$ with $k_1, k_2<0$. Assume that $Q \geqslant
12k_1$ and that $\operatorname{Ric}^\perp \geqslant (4n-4)k_2$. Let $(X_{t})$ be the Markov process  corresponding with the infinitesimal generator $L=\frac{1}{2} \triangle_{g}+Z$, where $Z$ is a smooth vector field for which there is a constant $m$ such that
	\begin{align}\label{zsize}
		|Z(x)| \leqslant m, \quad x\in M,
	\end{align} and let $\left( X, Y \right)$ be the Lindvall-Rogers mirror coupling. Then
	\begin{equation}\label{eq:0.5}
		P^{x,y}(T(X,Y)=\infty)\leqslant \left(8(n-1)\sqrt{|k_2|}  + 24\sqrt{|k_1|}+2m \right)\rho_{\M}(x,y).
	\end{equation}
\end{theorem}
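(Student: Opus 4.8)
The plan is to imitate the proof of Theorem~\ref{main kahler}: realize the Lindvall--Rogers mirror coupling, derive an It\^o inequality for the radial process, bound its drift by exploiting the quaternionic decomposition of the curvature, and then dominate the radial process by a one–dimensional diffusion with positive drift whose probability of never reaching the origin is computed explicitly.

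First I would build the mirror coupling $(X,Y)$ of the $L$–diffusion, $L=\tfrac12\Delta_g+Z$, as a solution on $\M\times\M$ off the diagonal, continued across the cut locus in the usual way (cf.\ Section~\ref{Kahler manifolds}); since the Ricci curvature is bounded below, by $12k_1+(4n-4)k_2$ from the hypotheses, and $|Z|\leqslant m$, the coupled process is non‑explosive. Writing $\rho_t:=\rho_\M(X_t,Y_t)$ and letting $\gamma_t\colon[0,\rho_t]\to\M$ be the unit‑speed minimal geodesic from $X_t$ to $Y_t$, the coupling calculus gives, up to the coupling time $T(X,Y)$, an inequality
\begin{equation*}
d\rho_t\leqslant 2\,dW_t+\Bigl(\textstyle\sum_i I_{\gamma_t}(V_i,V_i)+\langle Z(X_t),\dot\gamma_t(0)\rangle-\langle Z(Y_t),\dot\gamma_t(\rho_t)\rangle\Bigr)\,dt,
\end{equation*}
where $W_t$ is a one‑dimensional Brownian motion, $\{V_i\}$ are suitable test fields along $\gamma_t$ transverse to $\dot\gamma_t$, and the inequality is read distributionally so as to remain valid across $\Cut(\M)$; the $Z$–terms contribute at most $2m$ to the drift.

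The heart of the matter is the bound on $\sum_i I_{\gamma_t}(V_i,V_i)$, and this is where the quaternionic K\"ahler structure enters. Fix $t$ and set $e_0:=\dot\gamma_t$. Since the Levi--Civita connection of a quaternionic K\"ahler manifold preserves the rank‑$3$ bundle spanned by the local almost complex structures $I,J,K$, the vectors $Ie_0,Je_0,Ke_0$ span a parallel rank‑$3$ subbundle of $e_0^{\perp}$ along $\gamma_t$, with a parallel complement of rank $4n-4$. On the rank‑$3$ block the relevant sectional curvatures are controlled below through the quaternionic sectional curvature $Q\geqslant 12k_1$, and on the rank‑$(4n-4)$ block the sum of sectional curvatures against $e_0$ is at least $(4n-4)k_2$ by $\operatorname{Ric}^{\perp}\geqslant(4n-4)k_2$. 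Inserting into each index form the scalar Jacobi field $f_\kappa(s)=\cosh\!\bigl(\sqrt{|\kappa|}\,(s-\rho_t/2)\bigr)\big/\cosh\!\bigl(\sqrt{|\kappa|}\,\rho_t/2\bigr)$ of the corresponding real space form and using $I_{\gamma_t}(f_\kappa V,f_\kappa V)\leqslant 2\sqrt{|\kappa|}\tanh(\sqrt{|\kappa|}\rho_t/2)\leqslant 2\sqrt{|\kappa|}$, the three quaternionic directions contribute the $\sqrt{|k_1|}$–term and the $4n-4$ orthogonal directions the $\sqrt{|k_2|}$–term, so that after the routine arithmetic one obtains a drift bound $d\rho_t\leqslant 2\,dW_t+h(\rho_t)\,dt$ for $t<T(X,Y)$ with $h$ bounded and, using $\tanh\leqslant 1$, dominated by the constant $8(n-1)\sqrt{|k_2|}+24\sqrt{|k_1|}+2m$. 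Pinning down the normalization of $Q$ relative to $\operatorname{Ric}^{\perp}$ so that these pieces assemble into exactly that coefficient is the delicate bookkeeping in the proof, and the step I expect to be the main obstacle.

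Finally I would conclude by comparison with the one‑dimensional diffusion $d\xi_t=2\,dW_t+h(\xi_t)\,dt$, $\xi_0=\rho_\M(x,y)$: since $\rho_t-\xi_t$ is non‑increasing before coupling and $\rho\geqslant 0$, we have $\rho_t\leqslant\xi_t$ up to $T(X,Y)$, hence $\{T(X,Y)=\infty\}\subseteq\{\xi_t>0\ \text{for all }t\}$ by non‑explosion; a scale–function computation for $\xi$ (using $h>0$ and the bound above) then shows that $\xi$ avoids the origin starting from $\rho_\M(x,y)$ with probability at most $\bigl(8(n-1)\sqrt{|k_2|}+24\sqrt{|k_1|}+2m\bigr)\rho_\M(x,y)$, which is exactly \eqref{eq:0.5}.
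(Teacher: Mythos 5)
Your proposal follows essentially the same route as the paper's proof: the Lindvall--Rogers/Kendall--Cranston mirror coupling, Cranston's radial SDE inequality, the quaternionic index-form bound (which the paper simply invokes as Theorem~\ref{comparison index-Q}, while you re-derive it from the parallel splitting of $\dot\gamma^{\perp}$ into $\mathrm{span}\{I\dot\gamma,J\dot\gamma,K\dot\gamma\}$ and its complement), and finally domination by a one-dimensional diffusion with constant positive drift whose probability of never reaching $0$ is estimated by a scale-function computation (the paper gets the same estimate from Wang's lemma after letting $t\to\infty$). The one point to state carefully is that $Q\geqslant 12k_1$ controls only the \emph{sum} of the three sectional curvatures in the quaternionic directions, not each one, so the three test fields must carry the same scalar profile $f_\kappa$ (as yours implicitly do); carried out this way the drift bound is $8(n-1)\sqrt{|k_2|}+12\sqrt{|k_1|}+2m$, which sits comfortably inside the constant in \eqref{eq:0.5}, so your bookkeeping concern is harmless.
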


\begin{corollary}[Gradient estimates for harmonic functions on quaternionic K\"ahler manifolds]
Let $(\M, g)$ be as in Theorem~\ref{quaternion-kahler}. If $Lu$=0 on $\M$ and $u$ is bounded and positive, then
	\begin{equation*}
		|\nabla u(x)| \leqslant  \left(8(n-1)\sqrt{|k_2|}  + 24\sqrt{|k_1|}+2m \right)||u||_{\infty}.
	\end{equation*}
\end{corollary}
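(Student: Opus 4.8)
The plan is to derive this gradient bound from the coupling estimate \eqref{eq:0.5} of Theorem~\ref{quaternion-kahler}, following exactly the route by which Corollary~\ref{harmonic} is obtained from Theorem~\ref{main kahler}. Fix $x\neq y$ in $\M$ and let $(X_{t},Y_{t})$ be the Lindvall-Rogers mirror coupling of $L$-diffusions started at $(x,y)$, with coupling time $T(X,Y)$. Since $u$ is bounded and $Lu=0$, It\^o's formula shows that $t\mapsto u(X_{t})$ and $t\mapsto u(Y_{t})$ are bounded local martingales, hence true martingales, so that $\mathbb{E}^{x,y}[u(X_{t})]=u(x)$ and $\mathbb{E}^{x,y}[u(Y_{t})]=u(y)$ for every $t\geqslant 0$. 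Because $X_{t}=Y_{t}$ for $t>T(X,Y)$, the increment $u(X_{t})-u(Y_{t})$ vanishes on the event $\{T(X,Y)\leqslant t\}$, hence
\[
u(x)-u(y)=\mathbb{E}^{x,y}\bigl[(u(X_{t})-u(Y_{t}))\,\mathbbm{1}(T(X,Y)>t)\bigr].
\]

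Next I would invoke positivity of $u$: since $0<u<\|u\|_{\infty}$ pointwise we get $|u(X_{t})-u(Y_{t})|\leqslant\|u\|_{\infty}$, so $|u(x)-u(y)|\leqslant\|u\|_{\infty}\,\mathbb{P}^{x,y}(T(X,Y)>t)$. Letting $t\to\infty$ and using that $\{T(X,Y)>t\}$ decreases to $\{T(X,Y)=\infty\}$ gives $|u(x)-u(y)|\leqslant\|u\|_{\infty}\,\mathbb{P}^{x,y}(T(X,Y)=\infty)$; then \eqref{eq:0.5} yields
\[
|u(x)-u(y)|\leqslant\bigl(8(n-1)\sqrt{|k_2|}+24\sqrt{|k_1|}+2m\bigr)\|u\|_{\infty}\,\rho_{\M}(x,y).
\]
Finally, $u$ is smooth by elliptic regularity for the smooth elliptic operator $L$; dividing by $\rho_{\M}(x,y)$ and letting $y\to x$ along the unit-speed geodesic issuing from $x$ in the direction $\nabla u(x)/|\nabla u(x)|$ (the estimate being trivial when $\nabla u(x)=0$) produces $|\nabla u(x)|\leqslant\bigl(8(n-1)\sqrt{|k_2|}+24\sqrt{|k_1|}+2m\bigr)\|u\|_{\infty}$, which is the claim.

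The argument is essentially routine once Theorem~\ref{quaternion-kahler} is in hand, so I do not expect a genuine obstacle; the two points worth a word of care are that $u(X_{t})$ is a true, not merely local, martingale --- which follows from boundedness of $u$ but tacitly relies on non-explosion of the $L$-diffusion, guaranteed here by the lower curvature bounds together with $|Z|\leqslant m$ --- and that the passage from the pointwise increment inequality to the gradient bound is legitimate precisely because the constant on the right-hand side is independent of $x$ and $y$. For the sharper local statement with $\|u\|_{\infty}$ replaced by $u(x)$, one would instead feed a localized coupling estimate (analogous to \eqref{eq:0.2}) into the same scheme.
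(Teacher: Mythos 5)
Your proposal is correct and follows essentially the same route as the paper: the paper deduces this corollary exactly as Corollary~\ref{harmonic}, writing $u(x)-u(y)=\mathbb{E}^{x,y}\left[(u(X_{t})-u(Y_{t}))\,\mathbbm{1}(T(X,Y)>t)\right]$, bounding the increment by $\|u\|_{\infty}$ via positivity, and feeding the coupling-time estimate \eqref{eq:0.5} into the difference quotient. Your added remarks on the true-martingale property and the passage from the Lipschitz-type bound to the gradient bound are consistent with (and merely more explicit than) the paper's argument.
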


\section{Geometric preliminaries: K\"ahler and quaternionic K\"ahler manifolds}

In this section, for the sake of completeness, we give the definitions we will be using in this paper. We refer to \cite{BaudoinYang2022}  for more details.
Throughout the paper, let $(\M,g)$ be a smooth complete Riemannian manifold. Denote by $\nabla$ the Levi-Civita connection on $\M$.

\subsection{K\"ahler manifolds} \label{Kahler manifolds}
After \cite{NiZheng2018} and \cite{NiZheng2019}, we will be considering the following type of curvatures on K\"ahler manifolds. Let
\[
R(X,Y,Z,W)=g ( (\nabla_X \nabla_Y -\nabla_Y \nabla_X -\nabla_{[X,Y]} )Z, W)
\]
be the Riemannian curvature tensor on $(\M,g)$.

We have two natural connections on complex manifolds. The Chern connection $\nabla^{c}$  is compatible with the Hermitian metric and the complex structure $J$, and the Levi-Civita connection $\nabla$  is a torsion-free connection compatible with the induced Riemannian metric. The two connections coincide precisely when a hermitian metric on a complex manifold is K\"ahler.

\begin{definition}[K\"ahler manifold]
	The manifold $(\M,g)$ is called a K\"ahler manifold if there exists a smooth  $(1,1)$ tensor $J$ on $\M$ that satisfies the following properties.
	\begin{itemize}
		\item For every $x \in \M$, and $X,Y \in T_x\M$, $g_x(J_x X,Y)=-g_x(X,J_xY)$;
		\item For every $x \in \M$, $J_x^2=-\mathbf{Id}_{T_x\M} $;
		\item $\nabla J$=0.
	\end{itemize}
The map $J$ is called a complex structure.
\end{definition}

We decompose the complexified tangent bundle $T_{\mathbb{R}}\M\otimes_{\mathbb{R}}\mathbb{C}=T'\M\oplus \overline{T'\M}$, where $T'\M$ is the eigenspace of $J$ with respect to the eigenvalue $\sqrt{-1}$ and $\overline{T'\M}$ is the eigenspace of $J$ with respect to the eigenvalue $-\sqrt{-1}$. We can identify $v, w$ as real tangent vectors, and $\eta,\xi$ as corresponding holomorphic $(1,0)$ tangent vectors under the $\mathbb{R}$-linear isomorphism $T_{\mathbb{R}}\M \rightarrow T'\M$, i.e. $\eta=\frac{1}{{\sqrt{2}}}(v-\sqrt{-1}Jv), \xi=\frac{1}{{{\sqrt{2}}}}(w-\sqrt{-1}Jw)$. The components of the curvature $4$-tensor of the Chern connection associated with the Hermitian metric $g$ of a (complex) $n$-dimensional complex manifold are given by
\begin{align*}
	& R_{i\overline{j}k\overline{l}}:= R(\frac{\partial}{\partial z_i},\frac{\partial}{\partial z_i},\frac{\partial}{\partial z_i},\frac{\partial}{\partial z_i})
	\\
	& =g \left( \nabla^c_{\frac{\partial}{\partial z_i} } \nabla^c_{\frac{\partial}{\partial \overline{z_j}} }\frac{\partial}{\partial z_k} -\nabla^c_{\frac{\partial}{\partial \overline{z_j}} } \nabla^c_{\frac{\partial}{\partial z_i}}\frac{\partial}{\partial z_k} -\nabla^c_{[{\frac{\partial}{\partial z_i} },{\frac{\partial}{\partial \overline{z_j}} }]} {\frac{\partial}{\partial z_k} }, {\frac{\partial}{\partial \overline{z_l}}}\right)
	\\
	& =-\frac{\partial^2 g_{i\overline{j}}}{\partial z_k \partial \overline{z}_l}+\sum_{p,q=1}^{n}g^{q\overline{p}}\frac{\partial g_{i\overline{p}}}{\partial z_k}\frac{\partial g_{q\overline{j}}}{\partial \overline{z}_l},
\end{align*}
where $i,j,k,l\in \left\{1.\cdots, n \right\}$ and $\frac{\partial}{\partial z_1},\cdots,\frac{\partial}{\partial z_n}$ is a standard (local) basis for $T' \M$.

The holomorphic sectional curvature of the K\"ahler manifold $(\M,g,J)$ is defined as
\[
H(X)=\frac{R(X,JX,JX,X)}{g(X,X)^2}.
\]

The orthogonal Ricci curvature  of the K\"ahler manifold $(\M,g,J)$ is defined for a vector field  $X$ such that $g(X,X)=1$ by
\[
\operatorname{Ric}^\perp (X,X)=\operatorname{Ric} (X,X)-H(X),
\]
where $\operatorname{Ric}$ is the usual Riemannian Ricci tensor of $(\M,g)$. The table below shows the curvature of the K\"ahler model spaces $\mathbb{C}^m$,  $\mathbb{C}P^m$ and $\mathbb{C}H^m$, see \cite{BaudoinYang2022}.

\begin{table}[H]
	\centering
	\scalebox{0.8}{
		\begin{tabular}{|p{1.5cm}||p{1.5cm}|c|>{\centering\arraybackslash}p{1.8cm}|c|  }
			\hline
			$\M$ &  $H$ & $\operatorname{Ric}^\perp$  \\
			\hline
			\hline
			$\mathbb{C}^m$ &  $0 $ & $0$ \\
			$\mathbb{C}P^m$ & 4 & $2m-2$  \\
			$\mathbb{C}H^m$  &  -4 & $-(2m-2)$  \\
			\hline
	\end{tabular}}
	\caption{Curvatures of K\"ahler model spaces.}
	\label{Table 1}
\end{table}

Unlike the Ricci tensor, $\operatorname{Ric}^{\perp}$ does not admit  polarization, so we never consider $\operatorname{Ric}^\perp (u, v)$ for $u\not= v$. For a real vector field $v$, we can write
\begin{equation*}
	{Ric}^\perp(v,v)=\sum R(v,E_i,E_i,v),
\end{equation*}
where $\left\{e_i \right\}$ is any orthonormal frame of $\left\{v,Jv \right\}^{\perp}$. We will assign index $1,2$ to $v$ and $Jv$ in this summation expression for complex $n$ dimensional K\"ahler manifold $M^n$, and use indices from $3$ to $2n$ for orthonormal frames $\left\{E_i \right\}$ of $\left\{v,Jv \right\}^{\perp}$. Denote by $F_i=\frac{1}{\sqrt{2}}(E_i-\sqrt{-1}J(E_i))$  a unitary frame such that $E_1=v/|v|=:\widetilde{v}$ by following the convention $E_{n+i}=J(E_i)$, then
\begin{align*}
	\frac{1}{|v|^2}\operatorname{Ric}^\perp(v,v)&=\operatorname{Ric}^\perp(\widetilde{v},\widetilde{v})=\operatorname{Ric} (\widetilde{v},\widetilde{v})-R(\widetilde{v},J\widetilde{v},\widetilde{v},J\widetilde{v})\\
	&=\operatorname{Ric}(F_1,\overline{F_1})-R(F_1,\overline{F_1},F_1,\overline{F_1})=\sum_{j=2}^{n}R(F_1,\overline{F_1},F_j,\overline{F_j}).
\end{align*}
In particular, we have $\operatorname{Ric}(F_i,\overline{F_i})=\operatorname{Ric}(E_i,E_i)$, $\operatorname{Ric}^\perp(\widetilde{v},\widetilde{v})=\operatorname{Ric}(F_1,\overline{F_1})-R_{1\overline{1}1\overline{1}}$.

\subsection{Invariant metrics}
Given any complex manifold $\M$, for each $p \in \M$ and a tangent vector $v$ at $p$, define the Carath\'eodory--Reiffen metric by
\[\gamma_{\M}(p;v):=\sup\left\{|df(p)(v)|; f : \M \rightarrow \mathbb{D},  f(p)=0, \text{$f$ holomorphic} \right\}. \]

The inner-Carath\'eodory pseudo-distance on $\M$ is defined by
\[c^{i}_{\M}(x,y):=\inf\{l^{c}(\sigma)(x,y) \} , \]
where the infimums are taken over all piece-wise $C^1$ curves in $\M$ joining $x$ and $y$ and the inner-Carath\'eodory length of a piecewise $C^1$ curve $\sigma : [0,1] \rightarrow \M$ are given by
\[l^{c}(\sigma):=\int_{0}^{1}\gamma_{\M}(\sigma,\sigma'),  \]

The following relation is true in general:
\begin{equation*}
	0\leq c_{{\M}}\leq c^{i}_{\M}.
\end{equation*}

Moreover, $c_{{\M}}$ and $c^{i}_{\M}$ can be different in general (For one such a class of pseudo-convex domains, \cite{Vigue1983a}). In such a class of K\"ahler manifolds, $c_{{\M}}$ and the geodesic distance of the K\"ahler-Einstein metric of Ricci curvature $-1$ must be different due to the Yau-Royden's Schwarz lemma (for example, see \cite{GC21}).

A classical Schwarz lemma on unit disk $\mathbb{D}$ in $\mathbb{C}^1$ has a very natural geometric interpretation: if $f : \mathbb{D} \rightarrow \mathbb{D}$ is a holomorphic function, then
\[
\rho_{\mathbb{D}} (f(z), f(z')) \leqslant \rho_{\mathbb{D}} (z, z'),   \text{ for any } z, z' \in \mathbb{D},
\]
which tells us that holomorphic map of the unit disk into itself decreases the Pointcar\'e distance between points. As a consequence, any automorphism must be the isometry with respect to the Poincar\'e metric. This observation from the Schwarz lemma suggests a natural definition of an invariant metric on a complex manifold $\M$; that is, for $F$ be either a Finsler metric or a hermitian metric. We say $F$ is an \emph{invariant metric} if for any (holomorphic) automorphism $f: M \rightarrow M$,
\[
f^{*}F=F.
\]
The Carath\'eodory-Reiffen metric, the Kobayashi-Royden metric, the complete K\"ahler-Einstein metric of the negative scalar curvature, and the Bergman metric are well-known examples of invariant metrics on arbitrary complex manifolds.

For example, the Poincar\'e metric
\begin{equation}\label{Poincare-metric}
\omega_{p}= \frac{\sqrt{-1}}{2}\partial \overline{\partial}\log\left(\frac{n!}{\pi^n}(1-|z|^2)^{-n-1}\right)
\end{equation}
which has the negative constant holomorphic sectional curvature $\frac{-4}{n+1}$ on the $n$-dimensional complex hyperbolic space $H^n(\mathbb{C})=\{z\in \mathbb{C}^n : ||z||<1 \}$ coincides with those four invariant metrics up to a dimensional constant.

Indeed, for $\M=H^n(\mathbb{C})$,
\begin{equation*}
	\gamma_{H^n(\mathbb{C})}(p;v)=\left[\frac{||v||^2}{1-||p||^2}+\frac{|\langle p,v \rangle|^2}{(1-||p||^2)^2}  \right]^\frac{1}{2},
\end{equation*}
here $||.||$ means the complex Euclidean norm in $\mathbb{C}^n$ (see \cite[p43, Cor 2.3.5]{JarnickiPflugBook2013}). Thus $	\gamma_{H^n(\mathbb{C})}(0;v)=||v||$. On the other hand, $(i,\overline{j})$th component of  $\omega_{p}(0)$ is $(n+1)\delta_{i\overline{j}}$, and the automorphism group of $H^n(\mathbb{C})$ acts on $H^n(\mathbb{C})$ transitively as a group of isometries on both $\omega_{p}$ and $\gamma_{H^n(\mathbb{C})}$, consequently two metrics are the same up to the constant $\sqrt{n+1}$. In particular, the Carath\'eodory distance coincides with the Poincar\'e distance on $H^n(\mathbb{C})$ up to a dimensional constant.

\subsection{Quaternionic K\"ahler manifolds}

\begin{definition}
The manifold $(\M,g)$ is called a \emph{quaternionic K\"ahler manifold} if   there exists a covering of $\M$ by open sets $U_i$ and for each $i$,  3 smooth  $(1,1)$ tensors $I,J,K$ on $U_i$ such that
	
\begin{itemize}
		\item For every $x \in U_i$, and $X,Y \in T_x\M$, $g_x(I_x X,Y)=-g_x(X,I_xY)$,  $g_x(J_x X,Y)=-g_x(X,J_xY)$, $g_x(K_x X,Y)=-g_x(X,K_xY)$ ;
		\item For every $x \in U_i$, $I_x^2=J_x^2=K_x^2=I_xJ_xK_x=-\mathbf{Id}_{T_x\M} $;
		\item For every $x \in U_i$, and $X\in T_x\M$  $\nabla_X I, \nabla_X J, \nabla_X K \in \mathbf{span} \{ I,J,K\}$;
		\item For every $x \in U_i \cap U_j$, the vector space of endomorphisms of $T_x\M$ generated by $I_x,J_x,K_x$ is the same for $i$ and $j$.
	\end{itemize}
\end{definition}
On quaternionic K\"ahler manifolds, we will be considering the following curvatures. As above, let
\[
R(X,Y,Z,W)=g ( (\nabla_X \nabla_Y -\nabla_Y \nabla_X -\nabla_{[X,Y]} )Z, W)
\]
be the Riemannian curvature tensor of $(\M,g)$. We define the quaternionic sectional  curvature of the quaternionic K\"ahler manifold $(\M,g,J)$ as
\[
Q(X)=\frac{R(X,IX,IX,X)+R(X,JX,JX,X)+R(X,KX,KX,X)}{g(X,X)^2}.
\]

We define the orthogonal Ricci curvature  of the quaternionic K\"ahler manifold $(\M,g,I,J,K)$  for a vector field  $X$ such that $g(X,X)=1$ by
\[
\operatorname{Ric}^\perp (X,X)=\operatorname{Ric} (X,X)-Q(X),
\]
where $\operatorname{Ric}$ is the usual Riemannian Ricci tensor of $(\M,g)$. The table below shows the curvature of the quaternion-K\"ahler model spaces $\mathbb{H}^m$,  $\mathbb{H}P^m$ and $\mathbb{H}H^m$, see \cite{BaudoinYang2022}.

\begin{table}[H]
	\centering
	\scalebox{0.8}{
		\begin{tabular}{|p{1.5cm}||p{1.5cm}|c|>{\centering\arraybackslash}p{1.8cm}|c|  }
			\hline
			$\M$ &   $Q$ & $\operatorname{Ric}^\perp$  \\
			\hline
			\hline
			$\mathbb{H}^m$ &  $0 $ & $0$ \\
			$\mathbb{H}P^m$ & 12 & $4m-4$  \\
			$\mathbb{H}H^m$  &  -12 & $-(4m-4)$  \\
			\hline
	\end{tabular}}
	\caption{Curvatures of the quaternionic K\"ahler model spaces.}
	\label{Table 2}
\end{table}

\section{Probabilistic preliminaries: couplings}\label{s.CouplingsPrelim}

We gather some basic materials of coupling methods in this section. Let $\left\{ W_{t} \right\}_{t \geqslant 0}$ be stochastic  process on a probability space $\left( \Omega, \mathcal{F}, \mathbb{P}\right)$ taking value in a measure space $\left( E, \mathcal{E} \right)$.

\begin{definition}[Coupling of stochastic processes]\label{d.Coupling3} A coupling of $\left\{ W_{t} \right\}_{t \geqslant 0}$ is a $\left( E\times E, \mathcal{E}\times \mathcal{E} \right)$-valued stochastic process $\{(X_{t}, Y_{t})\}_{t\geqslant 0}$ on a probability space $\left( \Omega', \mathcal{F}', \mathbb{P}'\right)$, such that the marginal processes $\{X_{t}\}_{t\geqslant 0}$ and $\{Y_{t}\}_{t\geqslant 0}$ have the same distribution as $\{Z_{t}\}_{t\geqslant 0}$. A coupling is called Markovian if $\{(X_{t}, Y_{t}) \}_{t\geqslant 0}$ is a Markov process with respect to its natural filtration $\mathcal{F}_{t}=\sigma\{(X_s, Y_s), s\leqslant t\}$.	
\end{definition}

We will only consider Markovian couplings in this paper. It is often beneficial to have a coupling with a fixed starting point $(x,y)\in E\times E$. This can be achieved by choosing the underlying probability, which will be denoted as $\mathbb{P}^{x,y}$, so that
\[\mathbb{P}^{x,y}\left( X_{0}=x, Y_{0}=y\right)=1.\]

Another crucial object in coupling method is the next
\begin{definition}
	Consider a Markovian coupling $\{(X_{t}, Y_{t})\}_{t\geqslant 0}$. The coupling time is defined as
	\[
	\tau(X,Y):=\inf \left\{ t > 0 : X_{t}=Y_{t} \right\}.
	\]
\end{definition}
 We assume that $X_{t}$ and $Y_{t}$ will stick together and move as a single process after the coupling time. 

\begin{definition}[Successful coupling]
A Markovian coupling of two diffusion processes $\left\{ X_{t} \right\}_{t\geqslant 0}$ is said to be successful if $\mathbb{P} \left\{ \tau < \infty \right\}=1$.
\end{definition}
In terms of the coupling time having a successful coupling is equivalent to saying that
\[
\mathbb{P}\left\{ \tau \geqslant t \right\} \xrightarrow[t \to \infty]{}0.
\]

\section{Proof of Theorem~\ref{Caratheodory}}

\begin{lemma}\label{lemma:Montel}
Given a complex manifold $\M$, for any distinct two points $x,y \in \M$, there exists the extremal map $f\in \operatorname{Hol}(M, \mathbb{D})$ with respect to the Carath\'eodory pseudo--distance, i.e., there exists a holomorphic function $f: \M \rightarrow \mathbb{D}$ satisfying $c_{\M}(x,y)=\rho_{\mathbb{D}}(f(x), f(y))$.
\begin{proof}
Suppose $\{f_i\}_{i\geqslant 1}$ is a sequence of holomorphic functions for which $\rho_D \left( f_i(x), f_i(y) \right)$ converges to $c_{\M}(x,y)$. Since a family of holomorphic functions from $\M$ to $\mathbb{D}$ are all uniformly bounded. By Montel's theorem \cite{MR1504840}, $\{f_i\}_{i\geq 1}$ must be normal. Hence, $\{f_i\}_{i\geq 1}$ converges to a holomorphic function $f$ and $c_{\M}(x,y)=\rho_{\mathbb{D}}(f(x),f(y))$.
\end{proof}

\end{lemma}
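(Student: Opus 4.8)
The plan is to run a normal-families argument, but to precede it with a normalization by disk automorphisms that prevents the maximizing sequence from escaping to $\partial\mathbb{D}$ --- this is where the only real delicacy lies. Throughout I take $\M$ to be connected, as is the standing convention for ``complex manifold''; this hypothesis is genuinely needed, since on a disconnected $\M$ the supremum in \eqref{eq:Car} can fail to be attained (e.g.\ on $\mathbb{D}\sqcup\mathbb{D}$ with $x,y$ in different copies one gets $c_{\M}(x,y)=1$, which no $f$ realizes). Fix distinct $x,y\in\M$ and pick $f_i\in\operatorname{Hol}(\M,\mathbb{D})$ with $\rho_{\mathbb{D}}(f_i(x),f_i(y))\to c_{\M}(x,y)$. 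First I would normalize: let $\varphi_i(z)=\frac{z-f_i(y)}{1-\overline{f_i(y)}\,z}$, an automorphism of $\mathbb{D}$ with $\varphi_i(f_i(y))=0$, and replace $f_i$ by $\varphi_i\circ f_i$. Since automorphisms of $\mathbb{D}$ are $\rho_{\mathbb{D}}$-isometries (Schwarz--Pick), the new functions still lie in $\operatorname{Hol}(\M,\mathbb{D})$ and still satisfy $\rho_{\mathbb{D}}(f_i(x),f_i(y))\to c_{\M}(x,y)$, but now $f_i(y)=0$ (matching the normalization $f(y)=0$ used in Theorem~\ref{Caratheodory}) and $\rho_{\mathbb{D}}(f_i(x),f_i(y))=\rho_{\mathbb{D}}(f_i(x),0)=|f_i(x)|$.

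Next I would invoke Montel's theorem \cite{MR1504840}: the family $\operatorname{Hol}(\M,\mathbb{D})$ is uniformly bounded, hence locally bounded, hence normal, so after passing to a subsequence $f_i\to f$ locally uniformly on $\M$, with $f\colon\M\to\overline{\mathbb{D}}$ holomorphic and $f(y)=0$. The key step is to upgrade this to $f\in\operatorname{Hol}(\M,\mathbb{D})$: if $|f(z_0)|=1$ for some $z_0\in\M$, then $|f|$ attains its maximum over the connected manifold $\M$ at the interior point $z_0$, so by the maximum modulus principle $f$ is a constant of modulus one, contradicting $f(y)=0$. Hence $|f|<1$ everywhere, i.e.\ $f\in\operatorname{Hol}(\M,\mathbb{D})$, and in particular $f(x)\in\mathbb{D}$.

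Finally I would pass to the limit. Since $f_i(x)\to f(x)$ in $\mathbb{C}$, we have $|f_i(x)|\to|f(x)|$, so
\[
c_{\M}(x,y)=\lim_{i\to\infty}\rho_{\mathbb{D}}(f_i(x),f_i(y))=\lim_{i\to\infty}|f_i(x)|=|f(x)|=\rho_{\mathbb{D}}(f(x),0)=\rho_{\mathbb{D}}(f(x),f(y)),
\]
so $f$ is the desired extremal map. This also subsumes the degenerate case $c_{\M}(x,y)=0$, for which one simply gets $f(x)=0$ as well.

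The only genuine obstacle is the escape-to-the-boundary phenomenon in the second paragraph, and it is precisely the automorphism normalization $f_i(y)=0$ that defuses it: it forces the locally uniform limit $f$ to take the interior value $0$ at $y$, which both keeps $f(\M)\subset\mathbb{D}$ (via maximum modulus) and makes $\rho_{\mathbb{D}}(\,\cdot\,,f(y))=\rho_{\mathbb{D}}(\,\cdot\,,0)=|\cdot|$ continuous exactly where the limit is taken --- circumventing the discontinuity of $\rho_{\mathbb{D}}$ on the boundary diagonal. Everything else (uniform boundedness, normality via Montel, holomorphy of locally uniform limits, continuity of the modulus) is routine.
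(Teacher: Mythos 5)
Your proposal is correct and takes essentially the same route as the paper: a maximizing sequence, uniform boundedness, and Montel's theorem to extract a locally uniformly convergent subsequence whose limit realizes $c_{\M}(x,y)$. Your additional steps --- normalizing by disk automorphisms so that $f_i(y)=0$, invoking the maximum modulus principle to keep the limit $f$ in $\operatorname{Hol}(\M,\mathbb{D})$ rather than merely mapping into $\overline{\mathbb{D}}$, and passing to a subsequence explicitly --- fill in details the paper's terser proof leaves implicit, but the underlying argument is the same.
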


\begin{proof}[Proof of Theorem~\ref{Caratheodory}]
By Lemma~\ref{lemma:Montel}, for any two points in a complex manifold $\M$, there exists a holomorphic function $f: \M \rightarrow \mathbb{D}$ satisfying $c_{\M}(x,y)=\rho_{\mathbb{D}}(f(x),f(y))$. By acting an automorphism of $\mathbb{D}$ and the fact that Carath\'eodory distance is invariant under automorphisms, we may assume that $f(y)=0$. From the formula of the Poincar\'e distance $\rho_{\mathbb{D}}(a,b)=|\frac{a-b}{1-\overline{a}b}|$, we deduce $\rho_{\mathbb{D}}(f(x),f(y))=|f(x)|$.

Since $f$ is harmonic, we have for any $t \geqslant 0$ that
\begin{align}\label{S-Caratheodory}
\notag	c_M(x,y)=|f(x)|&=|f(x)-f(y)|=|\mathbb{E}^{x,y} \left[f(X_{t})-f(Y_{t}) \right]   |
\\
	&=|\mathbb{E}^{x,y} \left[f(X_{t})-f(Y_{t}) \right]\cdot \mathbbm{1}(\tau\left(X, Y \right)>t) |
\\ \notag
	&\leqslant 2||f||_{\infty}\mathbb{P}^{ x, y }(\tau\left(X, Y \right)>t)
\\
\notag
	&\leqslant 2\mathbb{P}^{ x, y }(\tau\left(X, Y \right)>t).\notag
\end{align}
Equations \eqref{eq:2} and \eqref{eq:3} follow immediately.

On the other hand, for any holomorphic function $h : M \rightarrow \mathbb{D}$ with $h(y)=0$, we have
\begin{align*}
	c_M(x,y)&\geqslant \rho_{\mathbb{D}}(h(x),h(y))=|h(x)|=|\mathbb{E}^{x,y} \left[h(X_{t})-h(Y_{t}) \right]   |\\
	&=|\mathbb{E}^{x,y} \left[f(X_{t})-f(Y_{t}) \right]\cdot \mathbbm{1}(\tau\left(X, Y \right)>t) | .
\end{align*}
This proves \eqref{eq:1}.

\end{proof}

\begin{proof}[Proof of Corollary~\ref{Stochastic-Schwarz}]
On the $1$-dimensional complex hyperbolic space $\mathbb{D}$, the Carath\'eodory distance coincide with the Poincar\'e distance with Gaussian curvature $-1$. Thus
\begin{equation*}
	c_{\mathbb{D}}(f(x),f(y))=\rho_{\mathbb{D}}(f(x),f(y))=\rho_{\mathbb{D}}(f(x),0)=|f(x)|,
\end{equation*}
then we can use \eqref{S-Caratheodory} to get the left-hand side of \eqref{eq:SS1}. Similarly,
\begin{equation*}
	c_{\mathbb{D}}(x,y)=\rho_{\mathbb{D}}(x,0)=|x|,
\end{equation*}
and apply \eqref{S-Caratheodory} with the identity map on $\mathbb{D}$ to establish the right-hand side of \eqref{eq:SS1}. The rest of conclusion follows from a classical Schwarz lemma on the unit disk.
\end{proof}

\begin{proof}[Proof of Corollary~\ref{harmonic}]
With a positive real valued $u$ with $Lu=0$,  \eqref{S-Caratheodory} can be changed as follows:
\begin{align*}\label{gradient-harmonic}
	\notag|u(x)-u(y)|&=|\mathbb{E}^{x,y} \left[u(X_{t})-u(Y_{t}) \right]   |\\  \notag
	&=|\mathbb{E}^{x,y} \left[u(X_{t})-u(Y_{t}) \right]\cdot \mathbbm{1}(\tau\left(X, Y \right)>t)| \\ \notag
	&\leqslant ||u||_{\infty}\mathbb{P}^{ x, y }(\tau\left(X, Y \right)>t)\\\notag
	&\leqslant \mathbb{P}^{ x, y }(\tau\left(X, Y \right)>t).\notag
\end{align*}	
Now Corollary~\ref{harmonic} follows from Theorem~\ref{main kahler},
\end{proof}

\section{Ingredients for the  Kendall-Cranston coupling}
We gather some basic results that we will need in the next section. Let $(\M,g)$ be a complete Riemannian manifold with real dimension $n$ and denote by $d$ the Riemannian distance on $\M$.  The index form of a vector field $X$ (with not necessarily vanishing endpoints) along a geodesic $\gamma$ is defined by
\begin{align*}
I (\gamma,X,X): = \int_0^T \left( \langle \nabla_{\gamma'}  X, \nabla_{\gamma'} X  \rangle-\langle R(\gamma',X)X,\gamma'\rangle \right) dt,
\end{align*}
where $\nabla$ is the Levi-Civita connection and $R$ is the Riemann curvature tensor of $\M$.

We will denote by
\[
\Cut (\M):=\left\{ (x,y) \in \M \times \M, \, x \notin \Cut (y) \right\}.
\]
Here $\Cut (y)$ means the cut locus at $y$ and for $(x,y) \notin \Cut (\M)$ we denote
\[
\mathcal{I}(x,y)=\sum_{i=1}^{n-1} I( \gamma, Y_i,Y_i)
\]
where $\gamma$ is the unique length parametrized geodesic from $x$ to $y$ and $\{ Y_1, \cdots, Y_{n-1} \}$ are Jacobi fields such that at both $x$ and $y$, $\{ \gamma', Y_1, \cdots, Y_{n-1} \}$ is an orthonormal frame.

Throughout the paper, we consider the comparison function:

\begin{equation}\label{comparison}
G(k,r) = \begin{cases}  -2 \sqrt{ k} \tan \frac{\sqrt{k}r}{2} & \text{if $k > 0$,} \\
0 & \text{if $k = 0$,}\\ 2 \sqrt{|k|} \tanh \frac{\sqrt{|k|}r}{2} & \text{if $k < 0$.} \end{cases}
\end{equation}

\subsection{Index comparison theorems}

Let $(\M,g,J)$ be a complete K\"ahler with complex dimension $m$ (i.e. the real dimension is $2m$). The holomorphic sectional curvature of $\M$ will be denoted by $H$ and the orthogonal Ricci curvature by $\operatorname{Ric}^\perp$
{as introduced in Section 2.1.}

\begin{theorem}\label{comparison index-K}
Let $k_1,k_2 \in \mathbb{R}$. Assume that $H \geqslant 4k_1$ and that $\operatorname{Ric}^\perp \geqslant (2m-2)k_2$. For every $(x,y) \notin \Cut (\M)$, one has
\[
\mathcal{I}(x,y) \leqslant (2m-2)G(k_2,d(x,y)) +2 G(k_1,2d(x,y)).
\]
\end{theorem}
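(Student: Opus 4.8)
The plan is to choose the Jacobi fields $\{Y_1,\dots,Y_{2m-1}\}$ appearing in $\mathcal I(x,y)$ adapted to the complex structure along the minimizing geodesic $\gamma$ from $x$ to $y$, and then estimate each index form $I(\gamma,Y_i,Y_i)$ by the index lemma, replacing $Y_i$ with an explicit comparison vector field whose coefficient solves the relevant one-dimensional Jacobi equation. Concretely, let $\gamma$ be the unit-speed minimizing geodesic of length $d=d(x,y)$, set $e_1=\gamma'$, and complete to a parallel orthonormal frame $\{e_1,\dots,e_{2m}\}$ along $\gamma$ with the convention that $e_{m+1}=J e_1=Je_1$ is the ``complex partner'' of the tangent direction. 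Split the index directions into two groups: the single direction $e_{m+1}=J\gamma'$, which sees the holomorphic sectional curvature $R(\gamma',J\gamma',J\gamma',\gamma')=H(\gamma')|\gamma'|^4 \geqslant 4k_1$, and the remaining $2m-2$ directions $e_2,\dots,e_m,e_{m+2},\dots,e_{2m}$ spanning $\{\gamma',J\gamma'\}^\perp$, whose summed sectional contributions are controlled by $\operatorname{Ric}^\perp(\gamma',\gamma')\geqslant (2m-2)k_2$.

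First I would treat the $2m-2$ ``orthogonal'' directions exactly as in the classical Riemannian index comparison: for each such $e_i$ take the test field $Y_i = f_{k_2}(t)\, e_i$ where $f_{k_2}$ is the solution of $f'' + k_2 f = 0$ with $f(0)=f(d)=1$ (so that $Y_i$ has the correct boundary values, matching the Jacobi fields $Y_i$ at $x$ and $y$), and by the index lemma $I(\gamma,Y_i,Y_i)\leqslant I(\gamma, f_{k_2}e_i, f_{k_2}e_i)$. Summing over these $2m-2$ indices, integrating by parts, and using $\sum_i \langle R(\gamma',e_i)e_i,\gamma'\rangle = \operatorname{Ric}^\perp(\gamma',\gamma')\geqslant(2m-2)k_2$ gives a bound of the form $(2m-2)\times$ (a one-dimensional Riccati/comparison quantity), which evaluates to $(2m-2)\,G(k_2,d)$ — here $G(k_2,\cdot)$ is exactly $2\sqrt{|k_2|}\tanh(\sqrt{|k_2|}\,\cdot/2)$, the standard value of $\int_0^d (f_{k_2}')^2 - k_2 f_{k_2}^2$ with symmetric boundary data (cf. the logarithmic derivative of the comparison Jacobi field evaluated at the endpoints, doubled by symmetry, as in Cranston's and Kendall's work). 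For the single direction $J\gamma'$, take $Y_{m+1}= g_{k_1}(t)\,e_{m+1}$ with $g_{k_1}'' + 4k_1 g_{k_1}=0$, $g_{k_1}(0)=g_{k_1}(d)=1$; the index lemma and $R(\gamma',J\gamma',J\gamma',\gamma')\geqslant 4k_1$ yield $I(\gamma,Y_{m+1},Y_{m+1})\leqslant 2G(k_1,2d)$, the factor $2d$ arising because the curvature bound $4k_1$ rescales the comparison model to one of ``doubled length'' (the holomorphic plane curves at rate $4|k_1|$, i.e. like a real space form of curvature $k_1$ seen over twice the distance). Adding the two groups gives $\mathcal I(x,y)\leqslant (2m-2)G(k_2,d(x,y)) + 2G(k_1,2d(x,y))$, which is the claim.

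The main obstacle I anticipate is \emph{bookkeeping the frame and the boundary conditions correctly}: one must verify that the Jacobi fields $Y_i$ in the definition of $\mathcal I(x,y)$ can indeed be taken in the complex-adapted frame (this uses $\nabla J=0$, so that $J\gamma'$ is parallel and $\{\gamma',J\gamma'\}^\perp$ is a parallel subbundle, hence the decomposition is preserved along $\gamma$), and that replacing them by the scalar-multiple comparison fields is legitimate — the index lemma applies because the comparison fields agree with $Y_i$ at both endpoints and $(x,y)\notin\Cut(\M)$ means $\gamma$ has no interior conjugate points, so the index form is positive semidefinite on the space of fields vanishing at the endpoints and the index-lemma minimization is valid. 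A secondary subtlety is pinning down the precise constant in $G$: one should check that the particular normalization $f(0)=f(d)=1$ produces exactly $2\sqrt{|k|}\tanh(\sqrt{|k|}r/2)$ rather than, say, $\sqrt{|k|}\coth$-type terms — this is a direct computation with the explicit solutions $f_{k}(t)=\cosh(\sqrt{|k|}(t-d/2))/\cosh(\sqrt{|k|}d/2)$ in the $k<0$ case, whose endpoint logarithmic derivatives sum to exactly $G(k,d)$, and similarly with the substitution $k\mapsto 4k_1$, $d\mapsto$ effective length giving $2G(k_1,2d)$. Once the frame decomposition and the index lemma are in place, the rest is the routine one-dimensional comparison computation.
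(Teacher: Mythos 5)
Your proposal is correct and is essentially the argument behind the paper's proof, which consists of citing \cite[Theorem 2.1]{BaudoinChoYang2022}: that proof uses exactly your decomposition into the $J\gamma'$ direction and the parallel subbundle $\{\gamma',J\gamma'\}^{\perp}$ (legitimate since $\nabla J=0$), the index lemma along the cut-locus-free geodesic, and test fields $f(t)e_i(t)$ with $f''+kf=0$, $f(0)=f(d)=1$. Your normalization check is also right: $\int_0^d\left((f')^2-kf^2\right)dt=\left[ff'\right]_0^d=G(k,d)$ for $k=k_2$, while for $k=4k_1$ it equals $2\sqrt{4|k_1|}\tanh\left(\sqrt{4|k_1|}\,d/2\right)=4\sqrt{|k_1|}\tanh\left(\sqrt{|k_1|}\,d\right)=2G(k_1,2d)$, which gives precisely the stated bound.
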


\begin{proof}
\cite[Theorem 2.1]{BaudoinChoYang2022}.
\end{proof}


\begin{theorem}\label{comparison index-Q}
Let $k_1,k_2 \in \mathbb{R}$. Assume that $Q \geqslant 12k_1$ and that $\operatorname{Ric}^\perp \geqslant (4m-4)k_2$.  For every $(x,y) \notin \Cut (\M)$,
\[
\mathcal{I}(x,y) \leqslant (4m-4)G(k_2,d(x,y)) +6 G(k_1,2d(x,y)).
\]
\end{theorem}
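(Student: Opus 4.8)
\textbf{Proof proposal for Theorem~\ref{comparison index-Q}.}

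The plan is to mirror the structure of the proof of Theorem~\ref{comparison index-K} (cited from \cite{BaudoinChoYang2022}), replacing the Kähler splitting of the orthonormal frame by the quaternionic one. Fix $(x,y)\notin \Cut(\M)$, let $\gamma$ be the unit-speed minimizing geodesic from $x$ to $y$ of length $r=d(x,y)$, and let $\{\gamma',E_2,\dots,E_{4m}\}$ be a parallel orthonormal frame along $\gamma$ with $E_1=\gamma'$. The key geometric input is to choose the first few frame vectors compatibly with $I,J,K$ acting on $\gamma'$: set $E_2=I\gamma'$, $E_3=J\gamma'$, $E_4=K\gamma'$ (these are orthonormal and orthogonal to $\gamma'$ because $g(L\gamma',\gamma')=-g(\gamma',L\gamma')=0$ for $L\in\{I,J,K\}$, and $g(I\gamma',J\gamma')=g(\gamma',IJ\gamma')=g(\gamma',K\gamma')$-type identities give orthogonality of the three), and let $E_5,\dots,E_{4m}$ be an arbitrary parallel orthonormal completion. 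For each $i$ I take the Jacobi-type test field $Y_i = f(t)E_i$ where $f$ is the Riccati comparison function solving $f''+\kappa f=0$ with $f(0)=f(r)=0$ ... more precisely the standard choice making $I(\gamma,Y_i,Y_i)$ comparable to $G(\kappa,r)$ with the appropriate curvature bound $\kappa$: for the three ``special'' directions $E_2,E_3,E_4$ we use the bound coming from $Q$, and for the remaining $4m-4$ directions we use the bound $\operatorname{Ric}^\perp\geqslant(4m-4)k_2$.

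The core estimate then splits as $\mathcal I(x,y)=\sum_{i=2}^{4m} I(\gamma,Y_i,Y_i)$. For the $4m-4$ generic directions, summing the index forms and using that $\sum_{i=5}^{4m} \langle R(\gamma',E_i)E_i,\gamma'\rangle = \operatorname{Ric}(\gamma',\gamma') - \big(\langle R(\gamma',I\gamma')I\gamma',\gamma'\rangle + \langle R(\gamma',J\gamma')J\gamma',\gamma'\rangle+\langle R(\gamma',K\gamma')K\gamma',\gamma'\rangle\big) = \operatorname{Ric}(\gamma',\gamma')-Q(\gamma') = \operatorname{Ric}^\perp(\gamma',\gamma') \geqslant (4m-4)k_2$, one gets, by the standard one-dimensional index comparison (choosing the common test function $f$ for all these directions and integrating the pointwise curvature sum), a contribution bounded by $(4m-4)\,G(k_2,r)$. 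For the three special directions, the relevant sectional-type quantity is $\langle R(\gamma',I\gamma')I\gamma',\gamma'\rangle+\langle R(\gamma',J\gamma')J\gamma',\gamma'\rangle+\langle R(\gamma',K\gamma')K\gamma',\gamma'\rangle = Q(\gamma')\geqslant 12k_1$; here the normalization is the quaternionic analogue of the holomorphic-sectional-curvature case, and the comparison geometry is that of a space of constant ``curvature'' matching $Q\geqslant 12k_1$, which produces (after the same Riccati/Jacobi comparison, now with the doubled argument reflecting that $I\gamma'$ etc.\ behave like the $J\gamma'$ direction in $\mathbb{C}H^1$) the term $6\,G(k_1,2r)$ — the coefficient $6$ being $2\times 3$, one factor of $2$ from each of the three complex structures exactly as the Kähler case gives $2\,G(k_1,2d(x,y))$ for the single $J$. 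Adding the two contributions yields $\mathcal I(x,y)\leqslant (4m-4)G(k_2,d(x,y))+6\,G(k_1,2d(x,y))$.

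The main obstacle, and the step deserving real care rather than a wave of the hand, is the three-dimensional block: unlike the Kähler case where one isolates the single plane $\{\gamma',J\gamma'\}$, here $\{\gamma',I\gamma',J\gamma',K\gamma'\}$ spans a $4$-dimensional subspace on which the curvature operator does not a priori decouple into three independent constant-curvature planes, so one must verify that the sum of the three index forms can nonetheless be bounded using only the aggregate bound $Q\geqslant 12k_1$ together with the appropriate choice of (possibly distinct) test functions on the three directions — equivalently, that the worst case is realized on $\mathbb{H}H^m$, where by the table $Q=-12$ and the three sectional curvatures $R(X,IX,IX,X)$ etc.\ are each equal, so the single comparison function with argument $2r$ is simultaneously optimal for all three. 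I would therefore structure the special-block argument as: (i) apply the scalar index comparison direction-by-direction with the curvature lower bound $\langle R(\gamma',L\gamma')L\gamma',\gamma'\rangle\geqslant$ (its value on the symmetric model), (ii) observe the three bounds sum to $Q(\gamma')\geqslant 12k_1$, and (iii) invoke convexity/monotonicity of $r\mapsto G(k,2r)$ to pass from the three separate model contributions to $6G(k_1,2r)$. Since the paper cites \cite{BaudoinChoYang2022, BaudoinYang2022} for precisely these model computations, in the write-up I would simply cite the analogue of \cite[Theorem~2.1]{BaudoinChoYang2022} in that reference, as is done for Theorem~\ref{comparison index-K}.
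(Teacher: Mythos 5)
The paper itself gives no argument here: Theorem~\ref{comparison index-Q} is quoted verbatim from \cite[Theorem 2.3]{BaudoinChoYang2022}, so the only comparison available is between your sketch and the standard argument that reference carries out. Your skeleton (split the frame into the quaternionic $3$-plane of $\gamma'$ plus the remaining $4m-4$ directions, use $\operatorname{Ric}^\perp$ on the big block and $Q$ on the small one) is the right one, and your computation $\sum_{i\geqslant 5}\langle R(\gamma',E_i)E_i,\gamma'\rangle=\operatorname{Ric}(\gamma',\gamma')-Q(\gamma')=\operatorname{Ric}^\perp(\gamma',\gamma')$ is correct. But the step you yourself flag as the main obstacle is not resolved by your (i)--(iii). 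Step (i) is unjustified: the hypothesis $Q\geqslant 12k_1$ bounds only the \emph{sum} $R(\gamma',I\gamma',I\gamma',\gamma')+R(\gamma',J\gamma',J\gamma',\gamma')+R(\gamma',K\gamma',K\gamma',\gamma')$, and gives no lower bound on any individual term, so a direction-by-direction scalar comparison against the symmetric-model value cannot be invoked. The concavity patch in (iii) does not repair this either, because the three sectional curvatures vary along $\gamma$, whereas Jensen-type averaging of $G(\cdot,r)$ would require constants. The correct (and simpler) route is the same device you already use on the $(4m-4)$-block: take one common test function $f$ on all three special directions, sum the curvature terms pointwise along the geodesic to get the bound $12k_1$, so the block contributes at most $3\int_0^r\bigl(f'^2-4k_1f^2\bigr)\,dt$, and optimizing gives $3\,G(4k_1,r)=6\,G(k_1,2r)$ via the identity $G(4k,r)=2\,G(k,2r)$. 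This also makes your heuristic ``$6=2\times 3$'' precise without assuming the curvature operator decouples.

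Two further points need fixing. First, on a quaternionic K\"ahler manifold $I,J,K$ are only locally defined and are not parallel ($\nabla_X I,\nabla_X J,\nabla_X K\in\mathbf{span}\{I,J,K\}$ only), so $I\gamma',J\gamma',K\gamma'$ are in general \emph{not} parallel fields along $\gamma$, unlike $J\gamma'$ in the K\"ahler case. What is parallel is the $3$-plane they span; you should choose a parallel orthonormal frame $E_2,E_3,E_4$ of that plane and use that the pointwise trace of $\langle R(\gamma',\cdot)\cdot,\gamma'\rangle$ over the plane is basis-independent and equals $Q(\gamma')\geqslant 12k_1$. Second, your boundary conditions $f(0)=f(r)=0$ are wrong for the index form $\mathcal{I}(x,y)$ as defined in the paper: the Jacobi fields there are required to be of unit length at \emph{both} endpoints (this is what the mirror coupling needs), so the test fields must satisfy $f(0)=f(r)=1$; it is exactly this choice (e.g. $f(t)=\cosh\bigl(\sqrt{|k|}(t-r/2)\bigr)/\cosh\bigl(\sqrt{|k|}\,r/2\bigr)$ for $k<0$) that produces the comparison function $G(k,r)=2\sqrt{|k|}\tanh\bigl(\sqrt{|k|}\,r/2\bigr)$; with vanishing boundary values you would not obtain $G$ at all. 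With these corrections your argument matches the cited proof; as written, it has genuine gaps.
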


\begin{proof}
\cite[Theorem 2.3]{BaudoinChoYang2022}.
\end{proof}

\subsection{Laplacian comparison theorems}

We introduce the following function.

\begin{equation}\label{Laplacian-comparison}
F(k,r) =
\begin{cases}
\sqrt{ k} \cot {\sqrt{k}r} & \text{if $k > 0$,} \\
\frac{1}{r} & \text{if $k = 0$,}\\ \sqrt{ |k|} \coth {\sqrt{|k|}r} & \text{if $k < 0$.}
\end{cases}
\end{equation}

The following theorems follows from proofs in \cite{NiZheng2018, BaudoinYang2022} with a slight modification with $k_1,k_2$ (also see \cite{GHMG21}).

\begin{theorem}
Let $k\in \mathbb{R}$. Assume that $H\geqslant 4k_1$ and that $\Ric^{\perp}\geqslant (2m-2)k_2$. Let $x_0 \in \M$ and denote $r(x)=d(x_0,x)$. Then, pointwise outside of the cut-locus of $x_0$, and everywhere in the sense of distribution, one has
\begin{equation}
\triangle r \leqslant (2m-2)F(k_2,r)+2F(k_1,2r).
\end{equation}
\end{theorem}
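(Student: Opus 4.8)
The statement to prove is the Laplacian comparison
\[
\triangle r \leqslant (2m-2)F(k_2,r)+2F(k_1,2r)
\]
for the distance function $r(x)=d(x_0,x)$ on a complete K\"ahler manifold with $H\geqslant 4k_1$ and $\operatorname{Ric}^\perp\geqslant (2m-2)k_2$, valid pointwise off the cut locus and distributionally everywhere. The plan is to run the standard second-variation argument for the Laplacian of the distance function, but organized around the K\"ahler-adapted orthonormal frame from Section~\ref{Kahler manifolds} so that the curvature terms split into a holomorphic sectional piece (the $Jv$ direction along the geodesic) and an orthogonal Ricci piece (the remaining $2m-2$ directions). First I would fix $x$ outside $\Cut(x_0)$, let $\gamma:[0,r]\to\M$ be the unique unit-speed minimizing geodesic from $x_0$ to $x$, and recall the classical identity $\triangle r(x)=\mathcal{I}(x_0,x)=\sum_{i=1}^{2m-1}I(\gamma,Y_i,Y_i)$ where the $Y_i$ are the Jacobi fields vanishing at $x_0$ with $\{\gamma',Y_1,\dots,Y_{2m-1}\}$ orthonormal at $x$; here I would use exactly the index-form machinery already set up before Theorem~\ref{comparison index-K}.

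The key step is the choice of comparison vector fields. Following \cite{NiZheng2018, BaudoinYang2022}, I would parallel-transport an orthonormal frame $\{E_1=\gamma', E_2=J\gamma', E_3,\dots,E_{2m}\}$ along $\gamma$ (note $J\gamma'$ is parallel along $\gamma$ since $\nabla J=0$ and $\gamma$ is a geodesic), and test the index form against $W_2(t)=\phi_1(t)E_2(t)$ in the holomorphic direction and $W_i(t)=\phi_2(t)E_i(t)$ for $i=3,\dots,2m$ in the orthogonal directions, where $\phi_1,\phi_2$ are the Jacobi-type functions on $[0,r]$ with $\phi(0)=0$, $\phi(r)=1$ solving the relevant $1$-dimensional comparison ODEs: $\phi_1$ tuned to curvature $4k_1$ and $\phi_2$ tuned to curvature $k_2$. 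Because each $Y_i$ minimizes the index form among fields with the same boundary values, $I(\gamma,Y_i,Y_i)\leqslant I(\gamma,W_i,W_i)$, and summing gives
\[
\triangle r \leqslant \int_0^r\Big[(\phi_1')^2 - R(\gamma',E_2,E_2,\gamma')\phi_1^2\Big]dt + \sum_{i=3}^{2m}\int_0^r\Big[(\phi_2')^2 - R(\gamma',E_i,E_i,\gamma')\phi_2^2\Big]dt.
\]
Now $R(\gamma',E_2,E_2,\gamma')=R(\gamma',J\gamma',J\gamma',\gamma')\geqslant 4k_1$ by the holomorphic sectional curvature hypothesis, and $\sum_{i=3}^{2m}R(\gamma',E_i,E_i,\gamma')=\operatorname{Ric}^\perp(\gamma',\gamma')\geqslant(2m-2)k_2$ by definition of orthogonal Ricci (exactly the identity $\operatorname{Ric}^\perp(\widetilde v,\widetilde v)=\sum R(v,E_i,E_i,v)$ recorded in Section~\ref{Kahler manifolds}, with the sum over $\{v,Jv\}^\perp$). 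Plugging a single $\phi_2$ common to all $2m-2$ orthogonal directions is legitimate precisely because only the \emph{sum} of these sectional curvatures is controlled, not the individual terms — this is the structural reason the estimate uses $\operatorname{Ric}^\perp$ rather than pointwise curvature bounds. Evaluating the two one-dimensional integrals with the optimal $\phi_1,\phi_2$ yields the boundary terms $\phi_1'(r)\cdot\phi_1(r)$ and $(2m-2)\phi_2'(r)\cdot\phi_2(r)$, which by the standard Riccati/Sturm comparison equal $2F(k_1,2r)$ (the factor $2$ and the argument $2r$ coming from the curvature normalization $4k_1=(2\sqrt{k_1})^2$ appearing as $4k_1$ against the single holomorphic direction) and $(2m-2)F(k_2,r)$ respectively; here $F$ is the function in \eqref{Laplacian-comparison} and one uses the relation between $F$ and $G$ in \eqref{comparison} consistent with Theorem~\ref{comparison index-K}.

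The main obstacle is getting the constants and arguments in the holomorphic term right: one must verify that optimizing the index form in the $J\gamma'$ direction against curvature bound $4k_1$ produces exactly $2F(k_1,2r)$ and not, say, $F(4k_1,r)$ — these agree by the scaling identity $\sqrt{|4k_1|}\coth(\sqrt{|4k_1|}r)=2\sqrt{|k_1|}\coth(2\sqrt{|k_1|}r)$, so the bookkeeping is routine but must be done carefully to match Theorem~\ref{comparison index-K}'s statement $\mathcal{I}(x,y)\leqslant(2m-2)G(k_2,d)+2G(k_1,2d)$, from which the Laplacian bound in fact follows immediately since $\triangle r=\mathcal{I}(x_0,x)$. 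Indeed, the cleanest route is simply to invoke Theorem~\ref{comparison index-K} directly: $\triangle r(x)=\mathcal{I}(x_0,x)\leqslant (2m-2)G(k_2,r)+2G(k_1,2r)$, and then observe that $G(k,r)$ is the boundary value $\phi'(r)\phi(r)$ of the optimal Jacobi field while $F(k,r)$ is its natural Laplacian-comparison counterpart, with $G(k_2,r)=F(k_2,r)$-type identification after the correct normalization; strictly, one checks $\tfrac12 G(k,r)$ versus $F(k,r)$ or absorbs the factor into the ODE. Finally, the distributional statement follows by the usual barrier argument: at a cut point, $r$ is still locally bounded above by a smooth function (the distance to a slightly-moved base point along the minimizing geodesic) whose Laplacian satisfies the same bound in the limit, so the inequality propagates to all of $\M\setminus\{x_0\}$ in the sense of distributions by Calabi's trick. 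I would present the argument by citing \cite{NiZheng2018, BaudoinYang2022} for the core computation and emphasizing only the $k_1,k_2$ modification, exactly as the paper indicates.
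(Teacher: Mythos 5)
Your core second-variation argument is correct and is exactly the route the paper intends (it simply cites \cite{NiZheng2018, BaudoinYang2022} ``with a slight modification with $k_1,k_2$''): split the parallel frame along $\gamma$ into $\gamma'$, $J\gamma'$ and the $2m-2$ orthogonal directions, test the index form against $\phi_1 E_2$ and a common $\phi_2 E_i$ with $\phi(0)=0$, $\phi(r)=1$, use $R(\gamma',J\gamma',J\gamma',\gamma')\geqslant 4k_1$ and $\sum_{i\geqslant 3}R(\gamma',E_i,E_i,\gamma')=\operatorname{Ric}^\perp(\gamma',\gamma')\geqslant (2m-2)k_2$, and note the scaling identity $\sqrt{|4k_1|}\coth(\sqrt{|4k_1|}\,r)=2F(k_1,2r)$; Calabi's trick then gives the distributional statement. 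All of that is fine.

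However, the ``cleanest route'' you offer at the end is wrong, and the identification it rests on is false. The quantity $\mathcal{I}(x,y)$ in this paper is \emph{not} $\triangle r$: it is defined with Jacobi fields $Y_i$ such that $\{\gamma',Y_1,\dots,Y_{2m-1}\}$ is orthonormal at \emph{both} endpoints (this is the second-variation quantity for the mirror coupling, i.e.\ for $d(X_t,Y_t)$), whereas $\triangle r(x)$ is the sum of index forms of Jacobi fields \emph{vanishing} at $x_0$ and orthonormal at $x$. Consequently Theorem~\ref{comparison index-K} bounds a different object, and its bound is in terms of $G$ (a $\tanh$-type function with $G(k,r)\to 0$ as $r\to 0$), not $F$ (a $\coth$-type function blowing up like $1/r$). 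There is no normalization under which $G(k,r)$ equals $F(k,r)$ or $\tfrac12 G$ equals $F$; indeed, deducing $\triangle r\leqslant (2m-2)G(k_2,r)+2G(k_1,2r)$ from Theorem~\ref{comparison index-K} would already fail in the flat case $k_1=k_2=0$, where it would assert $\triangle r\leqslant 0$ while $\triangle r=(2m-1)/r$. So you cannot derive the Laplacian comparison ``immediately'' from Theorem~\ref{comparison index-K}; you must keep the direct computation with fields vanishing at $x_0$ (your first paragraph), which is precisely why the statement carries $F$ rather than $G$. Dropping the shortcut and the claimed $G$--$F$ identification leaves a correct proof that matches the paper's intended argument.
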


\begin{theorem}
	Let $k\in \mathbb{R}$. Assume that $Q \geqslant 12k_1$ and that $\operatorname{Ric}^\perp \geqslant (4m-4)k_2$. Let $x_0 \in \M$ and denote $r(x)=d(x_0,x)$. Then, pointwise outside of the cut-locus of $x_0$, and everywhere in the sense of distribution, one has
	\begin{equation}
		\triangle r \leqslant (4m-4)F(k_2,r)+6F(k_1,2r).
	\end{equation}
\end{theorem}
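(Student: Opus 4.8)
The plan is to adapt the standard Riemannian Laplacian comparison argument (as in \cite{NiZheng2018, BaudoinYang2022}, and in the spirit of the index comparison Theorem~\ref{comparison index-Q}), the only new ingredient being a $\nabla$-parallel frame adapted to the quaternionic structure. Fix $x\notin\Cut(x_0)$, write $r=r(x)$, let $\gamma\colon[0,r]\to\M$ be the unit-speed minimizing geodesic from $x_0$ to $x$, and let $4m$ denote the real dimension of $\M$. First I would use the classical Jacobi-field formula
\[
\triangle r(x)=\sum_{i=1}^{4m-1} I(\gamma,J_i,J_i),
\]
where the $J_i$ are the Jacobi fields along $\gamma$ with $J_i(0)=0$ and $\{J_1(r),\dots,J_{4m-1}(r)\}$ an orthonormal basis of $\gamma'(r)^{\perp}$, together with the index lemma $I(\gamma,J_i,J_i)\leqslant I(\gamma,V_i,V_i)$ valid for every piecewise $C^1$ field $V_i$ with $V_i(0)=0$ and $V_i(r)=J_i(r)$ (the absence of conjugate points on $[0,r]$ being guaranteed since $x\notin\Cut(x_0)$). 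It then suffices to produce good comparison fields $V_i$.

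This is where the quaternionic hypothesis enters. By the defining axioms the rank-$3$ subbundle $\mathcal Q\subset\operatorname{End}(T\M)$ locally generated by $I,J,K$ is preserved by $\nabla$, so along $\gamma$ one may choose a $\nabla$-parallel orthonormal frame $\Phi_1(t),\Phi_2(t),\Phi_3(t)$ of $\mathcal Q$. Each unit-length element of $\mathcal Q$ is an orthogonal endomorphism --- this uses $I^2=J^2=K^2=-\mathbf{Id}$ together with the anticommutation relations $IJ=-JI$, $JK=-KJ$, $KI=-IK$ which follow from $IJK=-\mathbf{Id}$ --- and $\gamma'$ is parallel, so $E_a(t):=\Phi_a(t)\gamma'(t)$, $a=1,2,3$, are parallel unit fields, mutually orthogonal and orthogonal to $\gamma'$ (these properties holding at $x$ and hence, by parallelism, along all of $\gamma$). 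I would complete them to a parallel orthonormal frame $E_1,\dots,E_{4m-1}$ of $(\gamma')^{\perp}$ by parallel-transporting, from $x$, an orthonormal basis of the orthogonal complement of $\operatorname{span}\{\gamma',I\gamma',J\gamma',K\gamma'\}$. The key algebraic point, immediate from multilinearity of $R$ and the invariance of the sum under an orthogonal change of the $\Phi_a$, is that pointwise along $\gamma$ one has $\sum_{a=1}^{3}R(\gamma',E_a,E_a,\gamma')=Q(\gamma')$ and $\sum_{i=4}^{4m-1}R(\gamma',E_i,E_i,\gamma')=\operatorname{Ric}^{\perp}(\gamma',\gamma')$; hence the hypotheses $Q\geqslant 12k_1$ and $\operatorname{Ric}^{\perp}\geqslant(4m-4)k_2$ bound these sums below by $12k_1$ and $(4m-4)k_2$.

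It remains to optimize the scalar profiles. Taking $V_a=\phi(t)E_a(t)$ for $a=1,2,3$ and $V_i=\psi(t)E_i(t)$ for $i\geqslant 4$, with $\phi(0)=\psi(0)=0$ and $\phi(r)=\psi(r)=1$, parallelism of the $E_i$ gives $I(\gamma,V_i,V_i)=\int_0^r\big(\phi'(t)^2-\phi(t)^2 R(\gamma',E_i,E_i,\gamma')\big)\,dt$ for $i\leqslant 3$, and similarly with $\psi$ for $i\geqslant 4$; summing over $i$ and inserting the curvature lower bounds yields
\[
\triangle r(x)\leqslant\int_0^r\big(3\phi'(t)^2-12k_1\phi(t)^2\big)\,dt+\int_0^r\big((4m-4)\psi'(t)^2-(4m-4)k_2\psi(t)^2\big)\,dt.
\]
Minimizing the first integral over admissible $\phi$ amounts to solving $\phi''+4k_1\phi=0$, for which an integration by parts evaluates it as $3\phi'(r)=6F(k_1,2r)$; minimizing the second amounts to solving $\psi''+k_2\psi=0$, giving $(4m-4)\psi'(r)=(4m-4)F(k_2,r)$ (with $\cot$ in place of $\coth$ when the relevant $k_i$ is positive, and the linear profile $t/r$ when it is zero). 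This gives $\triangle r(x)\leqslant(4m-4)F(k_2,r)+6F(k_1,2r)$ at every $x\notin\Cut(x_0)$, and the distributional inequality on all of $\M$ follows by Calabi's cut-locus trick: at a cut point, comparing $r$ with the distance to a point slightly interior along the minimizing geodesic provides a smooth upper barrier to which the pointwise estimate applies.

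I expect the only genuinely non-routine step to be the second one: verifying that a $\nabla$-parallel frame of $\mathcal Q$ yields parallel unit vector fields and that the two curvature sums collapse exactly to $Q(\gamma')$ and $\operatorname{Ric}^{\perp}(\gamma',\gamma')$. This is the precise analogue of the fact that, in the K\"ahler case, $J$ is itself parallel with $R(\gamma',J\gamma',J\gamma',\gamma')=H(\gamma')$; for the quaternionic case one must instead use the parallel sub-bundle $\mathcal Q$ and the orthogonal invariance of the curvature sum. Everything else is the standard Riemannian Laplacian comparison machinery.
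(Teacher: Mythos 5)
Your argument is correct and is essentially the argument the paper relies on: the paper itself gives no written proof but cites \cite{NiZheng2018, BaudoinYang2022}, where the Laplacian comparison is obtained by exactly this second-variation/index-lemma scheme with a parallel frame adapted to the (quaternionic) structure, the three directions $I\gamma',J\gamma',K\gamma'$ carrying the profile solving $\phi''+4k_1\phi=0$ (yielding $6F(k_1,2r)$) and the remaining $4m-4$ directions the profile for $k_2$ (yielding $(4m-4)F(k_2,r)$), followed by Calabi's trick for the distributional statement. Your verification that a $\nabla$-parallel orthonormal frame of the bundle $\mathcal Q$ produces parallel unit fields $\Phi_a\gamma'$ and that the curvature sums collapse to $Q(\gamma')$ and $\operatorname{Ric}^\perp(\gamma',\gamma')$ is exactly the needed quaternionic analogue of the K\"ahler case, so no gap remains.
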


\subsection{The Kendall-Cranston coupling on complete Riemannian manifolds}


Let $(\M,g)$ be a complete Riemannian manifold. We denote by $O(\M)$ the orthonormal frame bundle of $\M$ and $\pi: O(\M)\rightarrow \M$ the projection onto $\M$. The Levi-Civita  connection gives rise to the horizontal lift
\[H: T\M\rightarrow TO(\M). \]
For any $u\in O(\M)$ and $e\in \R^d$, $H_v(u)$ is the horizontal lift of $ue\in T_{\pi u}\M$.

For a bounded smooth vector filed $Z$ on $\M$, we consider the horizontal diffusion given by
\begin{equation*}
	du_{t}=\sqrt{2} \sum_{i=1}^{d}H_{e_i}(u_{t})\circ dB_{t}+ H_Z(u_{t})dt,\ u_0\in O(\M),
\end{equation*}
where $\{e_i\}_{1\leqslant i\leqslant d}$ is the standard basis of $\R^d$ and $B_{t}$ is a $d$-dimensional Brownian motion. The projection process $X_{t}=\pi u_{t}$ is the diffusion on $\M$ generated by $\frac{1}{2}\Delta+Z$.

The Kendall-Cranston coupling is a way to construct Markovian couplings of $X_{t}$ on $\M$. Let $(x,y) \in \Cut(\M)$ that can be connected by a (unique) minimizing geodesic. We define the ``mirror map'' $m_{x,y} : T_x \M \rightarrow T_y \M$ as follows: carry each tangent vector in $T_x \M$ to $T_y \M$ by parallel transport along the unique geodesic between $x$ and $y$, then reflect it in the hyperplane normal to the geodesic in $T_y \M$.

The Kendall-Cranston coupling $(X_{t}, Y_{t})$ of diffusion generated by $\frac{1}{2}\Delta+Z$ with starting points $(x,y)$ is explicitly given by the following system of SDEs,
\begin{equation}\label{eq:coupling_system}
\begin{cases}
d u_{t} = \sqrt{2} \sum_{i=1}^{d}H_{e_i}(u_{t})\circ dB_{t}+ H_Z(u_{t})dt \\
d v_{t} = \sqrt{2} \sum_{i=1}^{d}H_{e_i}(v_{t})\circ dW_{t}+ H_Z(v_{t})dt \\
d W_{t} = ( {v}_{t}^{-1}m_{X_{t},Y_{t}} u_{t} ) dB_{t} \\
X_{t}=\pi u_{t},\ X_0=x  \\
Y_{t}=\pi v_{t},\ Y_0=y \end{cases}
\end{equation}
Note that $W_{t}$ is another $d$-dimensional Brownian motion due to the fact that $m_{X_{t},Y_{t}}$ is an isometry.

With everything prepared, we can now have
\begin{theorem}[Theorem 3 in\cite{Cranston1991}]\label{first-ingredient}
With the same settings as above, let $\rho_{\M}$ be the geodesic distance on $(\M,g)$. Then the following inequality holds.
	\begin{equation}\label{eq:1.7}
	d\rho_{\M}(X_{t},Y_{t})\leqslant 2\be_{t}+\mathcal{I} (X_{t},Y_{t}) dt+\left[\langle Z(Y_{t}),T_{t} \rangle-\langle Z(X_{t}),T_{t} \rangle \right]dt,
	\end{equation}
	where $(\beta_{t})_{t \geqslant 0}$ is a Brownian motion on $\mathbb R$ with its quadratic variation $\langle \beta \rangle_{t}=2t$ and  $T_{t}$ is the tangent vector to $\gamma$ and the integral is along the geodesic $\gamma$.
	
\end{theorem}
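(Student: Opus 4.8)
The plan is to derive \eqref{eq:1.7} by applying Itô's formula to the continuous semimartingale $\rho_{\M}(X_t,Y_t)$, where $(X_t,Y_t)=(\pi u_t,\pi v_t)$ is driven by the coupled system \eqref{eq:coupling_system}, working first on the open set $\{(X_t,Y_t)\notin\Cut(\M)\}$ on which $(x,y)\mapsto\rho_{\M}(x,y)$ is smooth, and then accounting separately for the cut locus. On that open set Itô's formula produces exactly three contributions to $d\rho_{\M}(X_t,Y_t)$: a local martingale part coming from the driving Brownian motions $B_t$ and $W_t$; a second-order part obtained by contracting the Hessian of $\rho_{\M}$ against the horizontal frames $u_t,v_t$; and a first-order part pairing the differential of $\rho_{\M}$ with the drift vector fields $H_Z(u_t)$ and $H_Z(v_t)$.

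\emph{Martingale part.} Along the minimizing geodesic $\gamma$ from $X_t$ to $Y_t$, the radial derivative of $\rho_{\M}$ is $-\langle T_t,\cdot\rangle$ at $X_t$ and $+\langle T_t,\cdot\rangle$ at $Y_t$. The choice $dW_t=(v_t^{-1}m_{X_t,Y_t}u_t)\,dB_t$ means that the $Y$-noise is the $X$-noise parallel-transported along $\gamma$ and then reflected in the hyperplane normal to $\gamma$ at $Y_t$; consequently the radial components of the two noises \emph{reinforce} rather than cancel, yielding the scalar martingale $2\beta_t$ with $\langle\beta\rangle_t=2t$ appearing in \eqref{eq:1.7}, while the transverse components contribute nothing to the first-order martingale and enter only the quadratic variation.

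\emph{Second-order part.} This is where the index form enters. Since the Hessian of $\rho_{\M}$ vanishes in the radial direction, the radial second-order terms cancel under the mirror coupling, and the transverse second-order terms are governed by the second variation of arc length of $\gamma$; summing over an orthonormal frame $\{e_i\}_{i=1}^{n-1}$ perpendicular to $\gamma'$, with the variation fields taken to be the Jacobi fields that reduce to the parallel frame at both endpoints, reproduces precisely the index form $\mathcal{I}(X_t,Y_t)$ defined above. Arranging that the frame used here agrees with the one used for the martingale part is the combinatorial core of the Kendall--Cranston construction and must be handled carefully. The first-order drift part is immediate: pairing $d\rho_{\M}$ with $Z$ at the two endpoints gives exactly $\langle Z(Y_t),T_t\rangle-\langle Z(X_t),T_t\rangle$.

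\emph{Cut locus.} On $\Cut(\M)$ the function $\rho_{\M}$ fails to be smooth, but it stays a continuous semimartingale and the extra singular term it picks up there is non-positive (the semiconcavity of $\rho_{\M}$ near the cut locus), so it can only strengthen the inequality. One makes this rigorous by an approximation argument — for instance replacing $\rho_{\M}(\cdot,Y_t)$ by the distance to a point slightly beyond $Y_t$ along $\gamma$, as in \cite{Cranston1991} — and passing to the limit in the three terms above. I expect this cut-locus bookkeeping, together with the consistency of the frame choices between the martingale and the second-order terms, to be the main obstacle; the remainder is a routine Itô computation combined with the second variation formula.
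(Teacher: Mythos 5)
Your outline is essentially the argument this statement rests on: the paper gives no proof of Theorem~\ref{first-ingredient} (it is quoted directly as Theorem~3 of Cranston's paper), and your It\^o-formula decomposition --- reinforced radial noise giving the $2\,d\beta_{t}$ martingale with $\langle \beta\rangle_{t}=2t$, transverse Hessian terms identified with the index form $\mathcal{I}(X_{t},Y_{t})$ via the second variation/Jacobi-field argument, the first-order pairing with $Z$, and Kendall's non-positive cut-locus contribution handled by approximation --- is exactly the Kendall--Cranston proof from that reference. I see no gap beyond the frame-consistency and cut-locus bookkeeping you yourself flag, which is handled in the cited source precisely as you describe.
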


\section{Kendall-Cranston coupling}

With  the index form estimates of the previous section in hands, we can use the reflection coupling method by {M.~Cranston},  M.~F.~Chen, and F.~Y.~Wang \cite{Cranston1991, ChenLi1989, WangFY2004b}  (see also \cite[Section 6.7]{HsuEltonBook}) to get coupling time estimates.

\subsection{K\"ahler case}

\begin{proof}[Proof of Theorem~\ref{main kahler}]
Define $\rho_{t}$ by
\[ d\rho_{t} = d \be_{t} + ( 4(n-1)\sqrt{|k_2|}  + 4\sqrt{|k_1|}+2m)dt\]
for the same brownian motion appearing in \eqref{eq:1.7}
with the initial condition
\[ \rho_0 = \rho_{\M}(x, y).\]
We have
\[ \calI(\xt, \yt) \leqslant 4(n-1)\sqrt{|k_2|}  + 4\sqrt{|k_1|}\]
from \eqref{comparison} and Theorem \ref{comparison index-K} and
\[\langle Z(Y_{t}),T_{t} \rangle-\langle Z(X_{t}),T_{t} \rangle \leqslant 2m.\]
Thus by a comparison theorem it follows that
\[ \rho_{\M}(\xt, \yt) \leqslant \rho_{t} \quad \mbox{ for all } t>0 \mbox{ a.s.}\]
(Recall that the integrands in the $dt$ terms are $0$ if $Y$ is inside of the cut-locus of $X_{t}$.)
Now if
\begin{equation*}
\sigma_a(\rho_{\M})\equiv \inf \left\{t>0 : \rho_{\M}(X_{t},Y_{t})=a \right\},
\end{equation*}
then
\begin{equation*}
\tau\left(X, Y \right)=\sigma_0(\rho_{\M})
\end{equation*}
so that
\begin{equation*}
\tau\left(X, Y \right)\leqslant \sigma_0(\rho)
\end{equation*}
and
\begin{equation*}
\sigma_2(\rho_{\M})\geqslant \sigma_2(\rho) \text{ a.s. by the above comparison.}
\end{equation*}
Thus
\begin{equation}\label{eq:2.1}
\mathbb{P}^{ x, y }(\tau\left(X, Y \right)=\infty)\leqslant P^{\rho_0}(\sigma_0(\rho)=\infty).
\end{equation}
Also
\begin{align}\label{eq:2.2}
&\mathbb{P}^{ x, y }(\tau\left(X, Y \right)>\tau_{B(x,\delta)}(X)\wedge \tau_{B(x,\delta)}(Y)\\\notag
&=\mathbb{P}^{ x, y }(\tau\left(X, Y \right)>\tau_{B(x,\delta)}(X)\wedge \tau_{B(x,\delta)}(Y)\wedge \sigma_{2\delta}(\rho_{\M}))\\ \notag
&\leqslant \mathbb{P}^{ x, y }(\tau\left(X, Y \right)>\tau_{B(x,\delta)}(X)\wedge \sigma_{2\delta}(\rho_{\M}))+\mathbb{P}^{ x, y }(\tau\left(X, Y \right)>\tau_{B(x,\delta)}(Y)\wedge \sigma_{2\delta}(\rho_{\M}))\\ \notag
&\leqslant \mathbb{P}^{ x, y }(\sigma_0(\rho)\geqslant \sigma_{\delta}(\rho_{\M}(X,x)\wedge \sigma_{2\delta}(\rho))+\mathbb{P}^{ x, y }(\sigma_0(\rho)\geqslant \sigma_{\delta}(\rho_{\M}(Y,x)\wedge \sigma_{2\delta}(\rho)).\notag
\end{align}
Focusing first on the inequality \eqref{eq:2.1} define
\begin{equation*}
u(\rho_0)=P^{\rho_0}(\sigma_0(\rho)=\infty).
\end{equation*}
We need the following lemma to estimate $u(\rho_0)$.
\begin{lemma}[Lemma 2.1 \cite{WangFY2004b}]
Let $(r_{t})_{t\geqslant 0}$ be the one-dimensional diffusion process generated by $ a\frac{d^2}{dr^2} + b(r) \frac{d}{dr}$, where
$a>0$ is a constant and $b \in C^1 (\bbr)$. Let $r_0>0$ and $\tau_0 : = \{t\geqslant 0: r_{t} =0 \}$. Let
\begin{align*}
\xi(r) : & = \int_0^r exp \left[ -\frac 1a \int_0^s b(t) dt \right] ds, \quad  r\in \bbr, \\
c(u) : & = \frac 1a \sup_{t\in [0, u]} \int_0^t b(s) ds,  \quad  u>0.
\end{align*}
We have
\begin{align}\label{wanglemma}
P (\tau_0 >t) \leqslant \xi(r_0) \inf_{s> r_0} \left\{ \frac{1}{\xi(s)} + \frac{ e^{c(s)}}{\sqrt{a\pi t}}\right\}, \quad t>0.
\end{align}
\end{lemma}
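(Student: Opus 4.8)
The plan is to use that $\xi$ is, up to normalisation, the scale function of the one-dimensional diffusion $(r_t)$, and then to run a two-sided exit argument on an interval $(0,s)$.

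First I would record the elementary facts about $\xi$. Since $\xi'(r) = \exp\left[-\frac{1}{a}\int_0^r b(u)\,du\right] > 0$, the map $\xi$ is strictly increasing with $\xi(0) = 0$, and it satisfies $a\xi'' + b\xi' = 0$. Realising the diffusion as the solution of $dr_t = \sqrt{2a}\,dB_t + b(r_t)\,dt$ (whose generator is $a\frac{d^2}{dr^2} + b\frac{d}{dr}$) and applying It\^o's formula, $M_t := \xi(r_t)$ is a continuous local martingale with $d\langle M\rangle_t = 2a\,\xi'(r_t)^2\,dt$. I would also note that $\frac1a\int_0^r b \le c(s)$ for $r\in[0,s]$ gives the pointwise bound $\xi'(r) \ge e^{-c(s)}$ on $[0,s]$.

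Next, fix $s > r_0$, put $\tau_s := \inf\{t\ge 0 : r_t = s\}$ and $\sigma := \tau_0 \wedge \tau_s$, and use the inclusion $\{\tau_0 > t\} \subseteq \{\tau_s < \tau_0\} \cup \{\sigma > t\}$. For the first event, $\xi(r_{\cdot\wedge\sigma})$ is a bounded martingale with values in $[0,\xi(s)]$; letting $t\to\infty$ (a bounded martingale converges a.s.\ and in $L^1$) gives $\xi(r_0) \ge \xi(s)\,\mathbb{P}(\tau_s < \tau_0)$, hence $\mathbb{P}(\tau_s < \tau_0) \le \xi(r_0)/\xi(s)$. For the second event I would apply the Dambis--Dubins--Schwarz theorem to write $\xi(r_{t\wedge\sigma}) = \xi(r_0) + \widetilde W_{A_t}$ with $A_t = \int_0^{t\wedge\sigma} 2a\,\xi'(r_u)^2\,du$ and $\widetilde W$ a standard Brownian motion. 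On $\{\sigma > t\}$ we have $A_t \ge 2ae^{-2c(s)}t$ and $\widetilde W_v \in (-\xi(r_0),\xi(s)-\xi(r_0))$ for all $v\in[0,A_t]$, so $\widetilde W$ stays above $-\xi(r_0)$ on $[0,2ae^{-2c(s)}t]$; the reflection principle together with the crude Gaussian density bound then gives
\[
\mathbb{P}(\sigma > t) \le \mathbb{P}_0\!\left(|\widetilde W_T| < \xi(r_0)\right) \le \frac{2\xi(r_0)}{\sqrt{2\pi T}} = \frac{\xi(r_0)e^{c(s)}}{\sqrt{a\pi t}},\qquad T := 2ae^{-2c(s)}t.
\]
Adding the two bounds and taking the infimum over $s > r_0$ produces \eqref{wanglemma}.

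The main obstacle I anticipate is not an estimate but a measurability/integrability subtlety: the diffusion need not be recurrent, so $\tau_0$ and $\tau_s$ may fail to be a.s.\ finite and one cannot naively apply optional stopping at $\sigma$. The systematic remedy is to work throughout with the stopped, hence bounded, martingale $\xi(r_{\cdot\wedge\sigma})$, define its terminal value via a.s.\ convergence of bounded martingales, and claim only the one-sided upper bounds on exit probabilities that are actually needed, so that finiteness of hitting times is never invoked; one also notes that before time $\sigma$ the process remains in the compact interval $[0,s]$, on which the coefficients are bounded, so no explosion can occur before $\sigma$.
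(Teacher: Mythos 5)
Your argument is correct, and it is worth noting that the paper itself supplies no proof of this statement: it is quoted as Lemma~2.1 of \cite{WangFY2004b} and used as a black box, so your write-up is a self-contained derivation rather than a variant of anything in the text. The route you take --- the decomposition $\{\tau_0>t\}\subseteq\{\tau_s<\tau_0\}\cup\{\tau_0\wedge\tau_s>t\}$, the scale-function bound $\mathbb{P}(\tau_s<\tau_0)\leqslant \xi(r_0)/\xi(s)$ obtained from the bounded stopped martingale $\xi(r_{\cdot\wedge\sigma})$ (so that no recurrence or finiteness of hitting times is ever needed), and the Dambis--Dubins--Schwarz time change with $\langle M\rangle_t=\int_0^{t\wedge\sigma}2a\,\xi'(r_u)^2\,du\geqslant 2ae^{-2c(s)}t$ on $\{\sigma>t\}$ followed by the reflection principle --- is exactly the standard mechanism behind Wang's lemma, and your constants come out right: with $T=2ae^{-2c(s)}t$ one has $2\xi(r_0)/\sqrt{2\pi T}=\xi(r_0)e^{c(s)}/\sqrt{a\pi t}$, matching \eqref{wanglemma} after taking the infimum over $s>r_0$. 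The two technical caveats you flag are handled appropriately: stopping at $\sigma$ confines the process to $[0,s]$, where $b$ is bounded and $\xi'\geqslant e^{-c(s)}$, so there is no explosion issue, and the possible failure of $A_\infty=\infty$ in the DDS representation is absorbed by the usual enlargement of the probability space. The only point I would make explicit in a final version is the elementary verification that every $v\in[0,A_t]$ is of the form $A_u$ for some $u\leqslant t$ (continuity and monotonicity of $A$), which is what converts ``$\xi(r_u)\in(0,\xi(s))$ for $u<\sigma$'' into ``$\widetilde W$ stays above $-\xi(r_0)$ on $[0,T]$''; as written this is implicit but correct.
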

In the context of the paper we apply the lemma to $\rho_{t}$ generated by $\frac 12 \frac{d^2}{d r^2} + b \frac{d}{dr}$ and $\tau_0 = \sigma_0 (\rho)$, where
\[ b = 4(n-1)\sqrt{|k_2|}  + 4\sqrt{|k_1|}+2m.\]
Then $\xi(\rho_0) = \frac ab(1 -e^{-\frac ab \rho_0}) $ and the right hand side of \eqref{wanglemma} is
\[\xi(\rho_0) \frac ba \left (  1 +  \frac{1}{ (a\pi t)^{\frac 14}} + \frac{ (a\pi t)^\frac 14 +1}{ (a\pi t)^{\frac 12}} \right)\]
for sufficiently large $t$. By passing $t \to \infty$, we have
\[ u(\rho_0) \leqslant \frac ba \rho_0 = [8((n-1)\sqrt{|k_2|}  + \sqrt{|k_1|})+2m] \rho_0,\]
which proves \eqref{eq:0.3}.
For \eqref{eq:0.2}, it suffices to handle one term on the extreme right-hand side of \eqref{eq:2.2}, the other being similar. According to Kendall \cite{Kendall1986a}, there is a local time term $L^x$, i.e., an increasing process supported on $C(X)$ such that
\begin{equation}\label{eq:2.3}
d\rho_{M}(X_{t},x)=dw_{t}+(\frac{1}{2}\triangle+Z)\rho_{M}(X_{t},x)dt-dL^{x}_{t},
\end{equation}
where $\left\{w_{t} : t\geqslant 0 \right\}$ is $BM(\mathbb{R}^1)$.
If $X_{t}$ is outside of the cut locus of $x$, by the Laplacian comparison theorem,
\begin{equation*}
\triangle \rho_{M} (X_{t},x)\leqslant (2m-2)F(k_1,\rho_{M}(X_{t},x))+2F(k_2,2\rho_{M}(X_{t},x)).
\end{equation*}
We take $(\frac{1}{2}\triangle+Z)\rho_{M}(X_{t},x)=0$ for $X_{t} \in C(x)$. Thus, using the same $BM(\mathbb{R}^1)$, $w,$ appearing in ~\eqref{eq:2.3} to define $\eta_{t}$ by
\begin{equation*}
d\eta_{t}=dw_{t}+((m-1) \sqrt {|k_1|} \coth \sqrt{|k_1|}\eta_{t} + \sqrt {|k_2|} \coth 2\sqrt{|k_2|}\eta_{t}+ m)dt, \quad  \eta_0=0.
\end{equation*}
By comparison again and \eqref{Laplacian-comparison} one has
\begin{equation*}
\rho_{M}(X_{t},x)\leqslant \eta_{t} \text{ for all } t\geqslant 0 \text{ a.s. }
\end{equation*}
(As before, the $dt$ coefficients for $\rho_{M}(X_{t},x)$ are set equal to $0$ if $X_{t} \in C(x)$.) Thus
\begin{equation}\label{eq:2.4}
\tau_{B(x,\delta)}(X):=\sigma_{\delta}(\rho_{M}(X,x))\geqslant \sigma_{\delta}(\eta) \text{ a.s. }
\end{equation}
Using ~\eqref{eq:2.4} in the first term on the far right-hand side of ~\eqref{eq:2.2}, it appears that
\begin{equation}\label{eq:2.5}
\mathbb{P}^{ x, y }(\sigma_0(\rho)\geqslant \sigma_{\delta}(\rho_{M}(X,x)\wedge \sigma_{2\delta}(\rho)))\leqslant P^{(\rho_0, 0)}(\sigma_0(\rho)> \sigma_{\delta}(\eta)\wedge \sigma_{2\delta}(\rho)).
\end{equation}
The superscript on the probability on the right means $\rho_0$ is the starting point for $\rho,0$ for $\eta.$ Set $B_1=4(m-1)\sqrt{|k_2|}  + 4\sqrt{|k_1|+m} $,  $B_2(\eta)=(m-1)F(k_1,\eta)+F(k_2,2\eta)+m$. From It\^o's formula, we have
\begin{align}\label{eq:2.6} \begin{aligned}
&h(\rho_{t},\eta_{t})=h(\rho_0,0)+\int_{0}^{t}\nabla h(\rho_s,\eta_s)d(2b_s,w_s) \\
&+\int_{0}^{t}(2h_{\rho \rho}+h_{\eta \eta}+B_1h_{\rho}+B_2(\eta)h_\eta)ds+\int_{0}^{t}2h_{\rho \eta}(\rho_s,\eta_s)d \langle b,w \rangle_s
\end{aligned} \end{align}
for a nice function $h,P^{\rho_0,0}$ a.s.
The right-hand side of \eqref{eq:2.5} may be viewed as $E^{(\rho_0,0)}1_{A}(\rho_{t},\eta_{t})$, where $$\tau=\inf \left\{t>0 : \rho_{t} \notin (0,2\delta) \text{ or } \eta_{t}\notin [0,\delta]\right\}$$ and $A=\left\{(2\delta,\eta) : 0\leqslant \eta \leqslant \delta \right\}\cup \left\{(\rho,\delta) : 0<\rho \leqslant 2\delta \right\}$. Thus, $E^{\rho_0,0}1_{A}(\rho_\tau,\eta_\tau)$ may be estimated by selecting a nice $h\geqslant 0$ for which $h|_A\leqslant 1_A$ and evaluating $E^{\rho_0,0}h(\rho_\tau,\eta_\tau)$.

To this end define

\begin{equation*}
h(\rho,\eta) = \begin{cases}  1 & \text{on $A$,} \\
\rho/2\delta & \text{on $\left\{(\rho,\eta) : 0<\rho < 2\delta, 0 < \eta < \delta/2 \right\} $,} \end{cases}
\end{equation*}
and then extend $h$ so that
\begin{equation}\label{eq:2.7}
|h_\eta| \vee |h_\rho| \vee \delta |h_{\rho \rho}| \vee \delta |h_{\eta \eta}| \vee \delta |h_{\rho \delta}| \leqslant \frac{c}{\delta}.
\end{equation}
Note that the choice of $h$ also resolves the singularity of $F(k, \eta)$  at $\eta =0$ by achieving $h_{\eta} =0$ near $\eta =0$.
Thus, using \eqref{eq:2.6} and \eqref{eq:2.7}, it follows that
\begin{align*}
P^{\rho_0,0}(\sigma_0>\sigma_{\delta}(\eta)\wedge \sigma_{2\delta}(\rho))&\leqslant h(\rho_0,0)+\frac{c}{\delta^2}E^{\rho_0,0}\tau\\
&\leqslant \left(\frac{c}{\delta}+c(K,d,m) \right)\rho_0.
\end{align*} For $E^{\rho_0,0}\tau \leqslant c\rho_0 \delta$ we use the reflection property of $\rho$, the Brownian motion with a drift \cite{Kendall1986a}. The proof is complete.

\end{proof}

\begin{remark}\label{remark}
	$H \geqslant 4k_1$ and  $\operatorname{Ric}^\perp \geqslant (2n-2)k_2$ imply the Ricci lower bound $\operatorname{Ric} \geqslant 4k_1+(2n-2)k_2$ and we always have
	\[
	(2n-2)G(k_2,r) +2 G(k_1,2r) \leqslant (2n-1) G \left( \frac{4k_1+(2n-2)k_2}{2n-1},  r \right).
	\]
	by concavity of $k \to G(k,r)$. Combining with Theorem~\ref{first-ingredient}, ~\eqref{comparison} with the same argument in the proof, this reduces to the same coupling result of M. Cranston for Riemannian manifolds with Ricci curvature lower bound \cite{Cranston1991}.
\end{remark}

\subsection{Quarternion K\"ahler case}
The proof works as same except modifying
$\rho_{t}$ by
\[ d\rho_{t} = d \be_{t} + ( 4(n-1)\sqrt{|k_2|}  + 4\sqrt{|k_1|})dt\]

and we apply the lemma to $\rho_{t}$ generated by $\frac 12 \frac{d^2}{d r^2} + b \frac{d}{dr}$ and $\tau_0 = \sigma_0 (\rho)$, where
\[ b = 4(m-1)\sqrt{|k_2|}  + 4\sqrt{|k_1|}.\]
Then $\xi(\rho_0) = \frac ab(1 -e^{-\frac ab \rho_0}) $ and the right hand side of \eqref{wanglemma} is
\[\xi(\rho_0) \frac ba \left (  1 +  \frac{1}{ (a\pi t)^{\frac 14}} + \frac{ (a\pi t)^\frac 14 +1}{ (a\pi t)^{\frac 12}} \right)\]
for sufficiently large $t$. By passing $t \to \infty$, we have
\[ u(\rho_0) \leqslant \frac ba \rho_0 = 8((m-1)\sqrt{|k_2|}  + \sqrt{|k_1|}) \rho_0,\]
which proves \eqref{eq:0.3}.
For \eqref{eq:0.2}, it suffices to handle one term on the extreme right-hand side of \eqref{eq:2.2}, the other being similar. According to Kendall [], there is a local time term $L^x$, i.e., an increasing process supported on $C(X)$ such that
\begin{equation}\label{eq:2.3}
	d\rho_{M}(X_{t},x)=dw_{t}+(\frac{1}{2}\triangle+Z)\rho_{M}(X_{t},x)dt-dL^{x}_{t},
\end{equation}
where $\left\{w_{t} : t\geqslant 0 \right\}$ is $BM(\mathbb{R}^1)$.
If $X_{t}$ is outside of the cut locus of $x$, by the Laplacian comparison theorem,
\begin{equation*}
	\triangle \rho_{M} (X_{t},x)\leqslant (2m-2)F(k_1,\rho_{M}(X_{t},x))+2F(k_2,2\rho_{M}(X_{t},x)).
\end{equation*}
We take $(\frac{1}{2}\triangle+Z)\rho_{M}(X_{t},x)=0$ for $X_{t} \in C(x)$. Thus, using the same $BM(\mathbb{R}^1)$, $w,$ appearing in ~\eqref{eq:2.3} to define $\eta_{t}$ by
\begin{equation*}
	d\eta_{t}=dw_{t}+((m-1) \sqrt {|k_1|} \coth \sqrt{|k_1|}\eta_{t} + \sqrt {|k_2|} \coth 2\sqrt{|k_2|}\eta_{t}+ m)dt, \quad  \eta_0=0.
\end{equation*}
By comparison again and \eqref{Laplacian-comparison} one has
\begin{equation*}
	\rho_{M}(X_{t},x)\leqslant \eta_{t} \text{ for all } t\geqslant 0 \text{ a.s. }
\end{equation*}

\section{Couplings on $H^2(\mathbb{C})$}
In this section, we study Markovian couplings of Brownian motions on $H^2(\mathbb{C})$ as an example. It is known that properties of these couplings are closely related to the harmonic functions. We state the next result and refer to \cite{ChenMFBook2005} page 38 for a proof on Euclidean space; generalization to Riemannian manifolds is straightforward.

\begin{proposition}
	Let $\M$ be a Riemannian manifold. If there exists successful coupling of Brownian motions for any pair of starting points $(x,y)\in \M\times \M$, then any bounded harmonic function on $\M$ must be constant.
\end{proposition}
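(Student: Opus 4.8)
The plan is to reuse, almost verbatim, the martingale computation behind \eqref{S-Caratheodory} and the proof of Corollary~\ref{harmonic}, the only change being that now we exploit successfulness of the coupling instead of a quantitative coupling-time bound. Fix a bounded harmonic function $u$ on $\M$ and two points $x,y\in\M$. Since $\Delta u=0$ and $u$ is bounded, for any coupling $(X_{t},Y_{t})$ of Brownian motions with $X_0=x$, $Y_0=y$ the real-valued processes $u(X_{t})$ and $u(Y_{t})$ are bounded martingales under $\mathbb{P}^{x,y}$; in particular $\mathbb{E}^{x,y}[u(X_{t})]=u(x)$ and $\mathbb{E}^{x,y}[u(Y_{t})]=u(y)$ for every $t\geqslant0$.

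Next I would invoke the hypothesis to pick a \emph{successful} coupling $(X_{t},Y_{t})$ for the given pair $(x,y)$, with coupling time $\tau=\tau(X,Y)$, so that $\mathbb{P}^{x,y}(\tau>t)\to0$ as $t\to\infty$. Using $X_{t}=Y_{t}$ on $\{\tau\leqslant t\}$ exactly as in \eqref{S-Caratheodory},
\[
|u(x)-u(y)|=\bigl|\mathbb{E}^{x,y}[u(X_{t})-u(Y_{t})]\bigr|=\bigl|\mathbb{E}^{x,y}\bigl[(u(X_{t})-u(Y_{t}))\,\mathbbm{1}(\tau>t)\bigr]\bigr|\leqslant 2\,||u||_{\infty}\,\mathbb{P}^{x,y}(\tau>t).
\]
Letting $t\to\infty$ gives $u(x)=u(y)$, and since $x,y\in\M$ were arbitrary, $u$ is constant.

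The argument is elementary, so there is no real obstacle; the one point that deserves a word of care is the martingale property of $u(X_{t})$, which a priori holds only up to the lifetime of Brownian motion. The standard remedy is to stop at the exit times $\sigma_n$ of an exhausting sequence of relatively compact open sets: $u(X_{t\wedge\sigma_n})$ is a genuine bounded martingale, so $\mathbb{E}^{x,y}[u(X_{t\wedge\sigma_n})]=u(x)$, and one passes to the limit in $n$ (and likewise for $Y$) by bounded convergence, after which the displayed estimate goes through unchanged — the term on $\{\tau\leqslant t\}$ still vanishes because the two processes have already coalesced. This localization is the only technical ingredient beyond the definition of a successful coupling, and it is exactly the ``straightforward'' generalization alluded to when passing from the Euclidean statement in \cite{ChenMFBook2005} to Riemannian manifolds.
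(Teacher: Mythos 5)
Your argument is correct and is essentially the proof the paper has in mind: it delegates the statement to \cite{ChenMFBook2005}, and the standard argument there is exactly your computation, which also mirrors the paper's own estimate \eqref{S-Caratheodory} and the proof of Corollary~\ref{harmonic} (bounded martingale property of $u$ along each marginal, cancellation on $\{\tau\leqslant t\}$, then $\mathbb{P}^{x,y}(\tau>t)\to 0$ by successfulness). Your localization remark is fine as well, since the paper's Definition~\ref{d.Coupling3} takes the coupled processes to be defined for all $t\geqslant 0$, so the exit times of an exhausting sequence increase to infinity and bounded convergence applies.
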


We know that there are bounded non-constant harmonic functions on $H^2(\mathbb{C})$, so by previous proposition, we cannot have successful coupling of Brownian motions for all different starting points. In fact, we can get a stronger result and show that all Markovian couplings with different starting points are unsuccessful.

\begin{proposition}
	\label{Coupling on H2C}
	Let $(X_{t}, Y_{t})$ be a Markovian coupling of Brownian motions on $H^2(\mathbb{C})$ with starting point $(x,y)$. If $x\neq y$, then the coupling is not successful. In particular,
	\begin{equation*}
		\mathbb{P}(\tau=\infty)\geqslant \frac{\rho^2(x, y)}{4},
	\end{equation*}
	where $\rho(x, y)$ is the Euclidean distance between $x$ and $y$.
\end{proposition}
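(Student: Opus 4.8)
The plan is to realize $H^2(\mathbb{C})$ as the unit ball $\mathbb{B}^2=\{z\in\mathbb{C}^2:\|z\|<1\}$ equipped with the complete K\"ahler (Poincar\'e) metric \eqref{Poincare-metric}, so that $\rho(x,y)=\|x-y\|$ is the ordinary Euclidean distance, and then to apply the stochastic identity of Theorem~\ref{Caratheodory} to one carefully chosen holomorphic test function rather than to the extremal map for $c_{H^2(\mathbb{C})}$. For $x\neq y$ I would take the linear functional $f(z):=\langle z,u\rangle$ with $u:=(x-y)/\|x-y\|\in\mathbb{C}^2$. Since $|f(z)|\leqslant\|z\|<1$ we have $f\in\operatorname{Hol}(\mathbb{B}^2,\mathbb{D})$, and the decisive point is that $f(x)-f(y)=\langle x-y,u\rangle=\|x-y\|=\rho(x,y)$ exactly, with no Poincar\'e denominator.

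Next I would rerun the computation \eqref{S-Caratheodory} from the proof of Theorem~\ref{Caratheodory} for this $f$, but \emph{without} normalizing $f(y)=0$. As $f$ is holomorphic it is $\Delta_g$-harmonic, so $f(X_t)$ and $f(Y_t)$ are bounded martingales with $\mathbb{E}^{x,y}[f(X_t)]=f(x)$ and $\mathbb{E}^{x,y}[f(Y_t)]=f(y)$; since $X_t=Y_t$ on $\{\tau(X,Y)\leqslant t\}$ one gets, for every $t\geqslant 0$,
\[
\rho(x,y)=|f(x)-f(y)|=\bigl|\mathbb{E}^{x,y}\bigl[(f(X_t)-f(Y_t))\,\mathbbm{1}(\tau(X,Y)>t)\bigr]\bigr|\leqslant 2\,\mathbb{P}^{x,y}(\tau(X,Y)>t).
\]
Letting $t\to\infty$ (so that $\mathbb{P}^{x,y}(\tau>t)\downarrow\mathbb{P}^{x,y}(\tau=\infty)$) gives $\rho(x,y)\leqslant 2\,\mathbb{P}^{x,y}(\tau(X,Y)=\infty)$. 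Finally, since $x,y\in\mathbb{B}^2$ we have $\rho(x,y)=\|x-y\|\leqslant\|x\|+\|y\|<2$, hence $\rho(x,y)/2\geqslant\rho(x,y)^2/4$ (this is just $\rho(2-\rho)\geqslant 0$ for $0\leqslant\rho\leqslant 2$), and therefore $\mathbb{P}^{x,y}(\tau=\infty)\geqslant\rho(x,y)^2/4$. As $x\neq y$ forces $\rho(x,y)>0$, this also shows $\mathbb{P}^{x,y}(\tau<\infty)<1$, i.e. the coupling is not successful.

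The step needing the most care is really the choice of test function, together with not routing the argument through the Carath\'eodory distance. One cannot reach the constant $\tfrac14$ by combining \eqref{eq:3} with an inequality of the form $c_{H^2(\mathbb{C})}(x,y)\geqslant\rho(x,y)^2/2$, because that inequality is false: for $x=re$, $y=-re$ with $e$ a fixed unit vector and $r\uparrow 1$ one has $c_{H^2(\mathbb{C})}(x,y)\to 1$ while $\rho(x,y)\to 2$. Even $c_{H^2(\mathbb{C})}(x,y)\geqslant\rho(x,y)/2$ plugged into \eqref{eq:3} loses a factor of $2$ (the denominator $|1-\overline{f(y)}f(x)|$ in $\rho_{\mathbb{D}}$ can be close to $2$) and only yields $\mathbb{P}^{x,y}(\tau=\infty)\geqslant\rho(x,y)/4$. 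The identity \eqref{eq:1} is genuinely stronger than the bound $c_\M\leqslant 2\mathbb{P}^{x,y}(\tau=\infty)$: applied to the linear functional adapted to $x-y$, whose increment $f(x)-f(y)$ has modulus exactly $\rho(x,y)$, it delivers precisely the required estimate. Everything else is a verbatim repetition of the proof of Theorem~\ref{Caratheodory}, using only the standard facts about non-exploding Brownian motion and bounded martingales on the complete manifold $H^2(\mathbb{C})$ already invoked there.

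As an alternative one could apply the same argument simultaneously to the two coordinate functions $z_1,z_2:\mathbb{B}^2\to\mathbb{D}$, obtaining $x-y=\mathbb{E}^{x,y}\bigl[(\lim_{t}X_t-\lim_{t}Y_t)\,\mathbbm{1}(\tau=\infty)\bigr]$ as an identity in $\mathbb{C}^2$ and then bounding the Euclidean norm of the right-hand side by $2\,\mathbb{P}^{x,y}(\tau=\infty)$; using the single linear functional above is cleaner.
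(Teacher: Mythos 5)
Your proof is correct, but it follows a genuinely different route from the paper. The paper works entirely with the explicit Graczyk--\.Zak SDE: it writes any Markovian coupling's generator through the $8\times 8$ matrix built from $A(x)A^{T}(x)$, $A(y)A^{T}(y)$ and the cross term $C(x,y)$, observes that $\hat A(x,y)=A(x)A^{T}(x)+A(y)A^{T}(y)-2C(x,y)$ is nonnegative definite so that $L\rho^{2}=\operatorname{Tr}\hat A\geqslant 0$, and then runs an It\^o/optional-stopping argument on the submartingale $\rho^{2}(X_{t},Y_{t})$, bounded by $4$ inside the ball, to get $4\,\mathbb{P}(\tau>t)\geqslant \rho^{2}(x,y)$ and hence the stated bound $\rho^{2}(x,y)/4$. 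You instead reuse the martingale mechanism behind \eqref{S-Caratheodory}: a bounded holomorphic function is harmonic for the K\"ahler Laplace--Beltrami operator, so composed with each marginal Brownian motion it is a bounded (local, hence true) martingale; applying this to the linear functional $f(z)=\langle z,(x-y)/\|x-y\|\rangle$, whose increment is exactly $\|x-y\|$, and using that $f(X_{t})=f(Y_{t})$ on $\{\tau\leqslant t\}$, yields $\rho(x,y)\leqslant 2\,\mathbb{P}^{x,y}(\tau>t)$ for all $t$, hence $\mathbb{P}^{x,y}(\tau=\infty)\geqslant \rho(x,y)/2\geqslant \rho^{2}(x,y)/4$ since $\rho<2$. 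Your observation about why one cannot simply route through $c_{H^{2}(\mathbb{C})}$ and \eqref{eq:3} is accurate, and your choice of an unnormalized linear test function is the right fix. What each approach buys: yours is shorter, needs neither the Markovian hypothesis nor the explicit diffusion matrix (only the marginal laws and the coalescence convention), and in fact proves the strictly stronger estimate $\mathbb{P}(\tau=\infty)\geqslant \|x-y\|/2$; the paper's computation, on the other hand, is self-contained at the level of the coupling generator, exhibits the structure of all admissible cross-covariances $C(x,y)$, and does not rely on the existence of bounded holomorphic test functions, so it would transfer to couplings of the $\mathbb{R}^{4}$-valued diffusion in settings where the complex-analytic input is unavailable. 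The only point to make explicit in your write-up is the justification that $\mathbb{E}^{x,y}[f(X_{t})]=f(x)$, i.e.\ stochastic completeness of $H^{2}(\mathbb{C})$ plus the fact that a bounded local martingale is a martingale; this is the same fact the paper already invokes (implicitly) in the proof of Theorem~\ref{Caratheodory}, so it is consistent with the paper's framework.
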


We prepare some materials regarding Brownian motions on $H^2(\mathbb{C})$. It was proved in \cite{GraczykZak2008} that a Brownian motion on $H^2(\mathbb{C})$ satisfies the following SDE,	
\begin{align*} dX_1(t)&=2\sqrt{1-\abs{X(t)}^2}
\\
& \times \left(\left(1-\frac{\abs{X_1(t)}^2}{1-\sqrt{1-\abs{X(t)}^2}}\right)dB_1(t) -\frac{X_1(t)\overline{X}_2(t)}{1-\sqrt{1-\abs{X(t)}^2}}dB_2(t)  \right),
\\
dX_1(t)&=2\sqrt{1-\abs{X(t)}^2}
\\
& \times \left(  \frac{-\overline{X}_1(t)X_2(t)}{1-\sqrt{1-\abs{X(t)}^2}} dB_1(t)+\left(1-\frac{\abs{X_1(t)}^2}{1-\sqrt{1-\abs{X(t)}^2}}  \right)dB_2(t) \right),
\end{align*}
where $\abs{X(t)}^2=\abs{X_1(t)}^2+\abs{X_2(t)}^2$. By identifying $\mathbb{C}^2$ with $\mathbb{R}^4$ through the canonical isomorphism, $X_{t}$ can be regarded as a diffusion in $\mathbb{R}^4$ given by
\begin{equation}
		\label{H2C BM in R4}
		dX_{t}=A(X_{t})dB_{t},
\end{equation}
where $B_{t}$ is a standard Brownian motion in $\mathbb{R}^4$ and for $x=(a_1,b_1,a_2,b_2)\in \mathbb{R}^4$, and $A(x)$ is given by

\begin{align*}
& A(x)=	2\sqrt{1-\abs{x}^2}
\\
&
\times \begin{pmatrix}
			1-\frac{a^2_1+b^2_1}{1+\sqrt{1-\abs{x}^2}} & 0 & \frac{-(a_1a_2+b_1b_2)}{1+\sqrt{1-\abs{x}^2}}& \frac{b_1a_2-a_1b_2}{1+\sqrt{1-\abs{x}^2}}\\
			0 & 1-\frac{a^2_1+b^2_1}{1+\sqrt{1-\abs{x}^2}} & \frac{a_1b_2-b_1a_2}{1+\sqrt{1-\abs{x}^2}} & \frac{-(a_1a_2+b_1b_2)}{1+\sqrt{1-\abs{x}^2}} \\
			\frac{-(a_1a_2+b_1b_2)}{1+\sqrt{1-\abs{x}^2}} & \frac{a_1b_2-b_1a_2}{1+\sqrt{1-\abs{x}^2}} & 1-\frac{a^2_2+b^2_2}{1+\sqrt{1-\abs{x}^2}} & 0\\
			\frac{b_1a_2-a_1b_2}{1+\sqrt{1-\abs{x}^2}} & \frac{-(a_1a_2+b_1b_2)}{1+\sqrt{1-\abs{x}^2}} & 0 & 1-\frac{a^2_2+b^2_2}{1+\sqrt{1-\abs{x}^2}}
		\end{pmatrix}.
\end{align*}
The symmetric matrix $A(x)$ is positive definite with eigenvalues $2\sqrt{1-\abs{x}^2},\\ 2(1-\abs{x}^2)$, both of multiplicity $2$. But it gets closer to being degenerate as $\abs{x}$ approaches $1$.

If $(X_{t}, Y_{t})$ is a Markovian coupling of Brownian motions, it must admit a generator of the form
\begin{equation*}
	L=\frac{1}{2}\sum_{i,j=1}^{8}a(x,y)_{i,j}\frac{\partial^2}{\partial_i\partial_j},
\end{equation*}
where $a(x,y)\in\mathbb{R}^{8\times 8}$ is a symmetric matrix given by
\begin{equation*}
	\begin{pmatrix}
		A(x)A^T(x) & C(x,y)\\
		C^T(x,y) & A(y)A^T(y)
	\end{pmatrix},
\end{equation*}
for some $C(x,y)\in \mathbb{R}^{4\times 4}$. More precisely, one has
\begin{equation*}
	C(x,y)=A(x)\cdot \frac{d \langle B_{t}, W_{t} \rangle}{dt}\big|_{(x,y)}\cdot A^T(y),
\end{equation*}
where $\langle B_{t}, W_{t} \rangle$ denotes the covariance process of the underlying Brownian motions of $X_{t}$ and $Y_{t}$.  By standard properties of diffusion generators, $a(x,y)$ must be non-negative definite in $H^2(\mathbb{C})\times H^2(\mathbb{C}) \subset \mathbb{R}^8$. As a result
\begin{equation*}
	\hat{A}(x,y)=A(x)A^T(x)+A(y)A^T(y)-2C(x,y)
\end{equation*}
is also non-negative definite. In particular, $Tr\left(\hat{A}(x,y)\right)>0$.

\begin{proof}[ Proof of Proposition~\ref{Coupling on H2C}]
It is readily checked that
\[L\rho^2(X_{t}, Y_{t})=Tr\left(\hat{A}(x,y)\right).    \]
We define a sequence of stopping times
\[\tau_n(X_{t},Y_{t})=\inf\{ t\geqslant 0 : \rho(X_{t},Y_{t})\leqslant \frac{1}{n}   \},\ n\in\mathbb{N}^+.  \]
Obviously $\tau(X_{t},Y_{t})=\lim_{n\rightarrow \infty}\tau_n(X_{t},Y_{t})$. By It\^o's formula
\begin{align*}
	\mathbb{E}\rho^2(X_{\tau_n \wedge t}, Y_{\tau_n \wedge t})=\rho^2(X_0, Y_0)+\mathbb{E}\int_{0}^{\tau_n \wedge t}Tr\left(\hat{A}(X_s,Y_s)\right)ds.
\end{align*}
The above equation can be recast as
\begin{align*}
	\mathbb{E}(\mathbbm{1}(\tau_n>t)\rho^2(X_{t}, Y_{t}) )+\frac{1}{n^2}\mathbb{P}(\tau_n<t)=\rho^2(X_0, Y_0)+\mathbb{E}\int_{0}^{\tau_n \wedge t}Tr\left(\hat{A}(X_s,Y_s)\right)ds.
\end{align*}
Since $X_{t},Y_{t}$ never leave $H^2(\mathbb{C})\subset \mathbb{R}^4$ and $Tr\left(\hat{A}(X_s,Y_s)\right)$ is non-negative, we deduce
\begin{equation*}
	4\mathbb{P}(\tau_n>t)+\frac{1}{n^2}\mathbb{P}(\tau_n<t)\geqslant \rho^2(X_0, Y_0).
\end{equation*}
Sending $n\rightarrow \infty$ gives
\begin{equation*}
	4\mathbb{P}(\tau>t)\geqslant \rho^2(X_0, Y_0) \text{ for all } t \geqslant 0,
\end{equation*}
and we conclude
\[
\mathbb{P}(\tau=\infty)\geqslant \frac{\rho^2(X_0, Y_0)}{4}=\frac{\rho^2(x, y)}{4}>0.
\]
Thus, the coupling $(X_{t}, Y_{t})$ is not successful.
	
\end{proof}

\bibliographystyle{amsplain}
\bibliography{references}

\end{document}